\newcommand{\rrVert}{\Vert}
\newcommand{\rrvert}{\vert}
\newcommand{\llVert}{\Vert}
\newcommand{\llvert}{\vert}
\newtheorem{theorem}{Theorem}[section]
\newtheorem{condition}[theorem]{Condition}
\newtheorem{lemma}[theorem]{Lemma}
\begin{document}
\begin{frontmatter}

\title{On the large deviation rate function for the empirical measures
of reversible
jump Markov~processes}
\runtitle{LDP for empirical measures}

\begin{aug}
\author[A]{\fnms{Paul} \snm{Dupuis}\thanksref{T1}\ead[label=e1]{dupuis@dam.brown.edu}}
\and
\author[B]{\fnms{Yufei} \snm{Liu}\corref{}\thanksref{T2}\ead[label=e2]{liuyf03@gmail.com}}
\thankstext{T1}{Supported in part by the Department of Energy
(DE-SCOO02413), the NSF (DMS-10-08331), and the Army
Research Office (W911NF-09-1-0155, W911NF-12-1-0222).}
\thankstext{T2}{Supported in part by the Department of Energy
(DE-SCOO02413).}
\runauthor{P. Dupuis and Y. Liu}
\affiliation{Brown University}
\address[A]{Division of Applied Mathematics\\
Brown University\\
Box F\\
182 George St.\\
Providence, Rhode Island 02912\\
USA\\
\printead{e1}} 
\address[B]{Google Inc.\\
1000 Amphitheatre Parkway\\
Mountain View, California 94043\\
USA\\
\printead{e2}}
\end{aug}

\received{\smonth{2} \syear{2013}}
\revised{\smonth{9} \syear{2013}}

%
\begin{abstract}
The large deviations principle for the empirical measure for both continuous
and discrete time Markov processes is well known. Various expressions are
available for the rate function, but these expressions are usually as the
solution to a variational problem, and in this sense not explicit. An
interesting class of continuous time, reversible processes was
identified in
the original work of Donsker and Varadhan for which an explicit
expression is
possible. While this class includes many (reversible) processes of interest,
it excludes the case of continuous time pure jump processes, such as a
reversible finite state Markov chain. In this paper, we study the large
deviations principle for the empirical measure of pure jump Markov processes
and provide an explicit formula of the rate function under reversibility.
\end{abstract}

\begin{keyword}[class=AMS]
\kwd{60E10}
\kwd{60J75}
\end{keyword}
\begin{keyword}
\kwd{Large deviation rate function}
\kwd{reversible Markov process}
\kwd{pure jump process}
\kwd{empirical measure}
\kwd{weak convergence}
\end{keyword}

\end{frontmatter}
%
\section{Introduction}
\label{intro}

Let $X ( t ) $ be a time homogeneous Markov process with Polish
state space $S$, and let $P ( t,x,dy ) $ be the transition function
of $X ( t ) $. For $t\in{}[0,\infty)$, define $T_{t}$ by
\[
T_{t}f ( x ) \doteq\int_{S}f ( y ) P ( t,x,dy ).
\]
Then $T_{t}$ is a contraction semigroup on the Banach space of bounded, Borel
measurable functions on $S$ (\cite{ethkur}, Chapter~4.1). We use $\mathcal{L}$
to denote the infinitesimal generator of $T_{t}$ and $\mathcal{D}$ the domain
of $\mathcal{L}$ (see \cite{ethkur}, Chapter~1). Hence, for each bounded
measurable function $f\in\mathcal{D}$,
\[
\mathcal{L}f ( x ) =\lim_{t\downarrow0}\frac{1}{t} \biggl[ \int
_{S}f ( y ) P ( t,x,dy ) -f ( x ) \biggr].
\]
The empirical measure (or normalized occupation measure) up to time $T$
of the
Markov process $X ( t ) $ is defined by
%
\begin{equation}
\eta_{T} ( \cdot ) \doteq\frac{1}{T}\int_{0}^{T}
\delta_{X (
t ) } ( \cdot ) \,dt. \label{eta}%
\end{equation}
Let $\mathcal{P} ( S ) $ be the metric space of probability
measures on $S$ equipped with the L\'evy--Prohorov metric, which is compatible
with the topology of weak convergence. For $\eta\in\mathcal{P} (
S ) $, define
%
\begin{equation}
I ( \eta ) \doteq-\mathop{\inf_{u\in\mathcal{D}}}_{u>0}\int
_{S}\frac{\mathcal{L}u}{u}\,d\eta. \label{rate}%
\end{equation}
It is easy to check that $I$ thus defined is lower semicontinuous under the
topology of weak convergence. Consider the following regularity assumption.

\begin{condition}
\label{cond_diff} There exists a probability measure $\lambda$ on $S$ such
that for $t>0$ the transition functions $P ( t,x,dy ) $ have
densities with respect to $\lambda$, that is,
%
\begin{equation}
P ( t,x,dy ) =p ( t,x,y ) \lambda ( dy ). \label{eq_diff}%
\end{equation}
\end{condition}

Under additional recurrence and transitivity conditions, Donsker and\break Varadhan
\cite{donvar1,donvar3} prove the following. For any open set $O\subset
\mathcal{P} ( S ) $
%
\begin{equation}
\liminf_{T\rightarrow\infty}\frac{1}{T}\log P \bigl( \eta_{T}
( \cdot ) \in O \bigr) \geq-\inf_{\eta\in O}I ( \eta ),
\label{ldl}%
\end{equation}
and for any closed set $C\subset\mathcal{P} ( S ) $
%
\begin{equation}
\limsup_{T\rightarrow\infty}\frac{1}{T}\log P \bigl( \eta_{T}
( \cdot ) \in C \bigr) \leq-\inf_{\eta\in C}I ( \eta ).
\label{ldu}%
\end{equation}

We refer to (\ref{ldl}) as the large deviation lower bound and (\ref
{ldu}) as
the large deviation upper bound. Under ergodicity, the empirical measure
$\eta_{T}$ converges to the invariant distribution of the Markov process
$X ( t ) $. The large deviation principle characterizes this
convergence through the associated rate function. While there are many
situations where an explicit formula for (\ref{rate}) would be useful,
it is
in general difficult to solve the variational problem. The main existing
results on this issue are for the self-adjoint case in the continuous time
setting; see \cite{donvar1,pin,str1}. Specifically, suppose there is a
$\sigma$-finite measure $\varphi$ on $S$, and that the densities in~(\ref{eq_diff}) satisfy the following reversibility condition:
%
\begin{equation}
p ( t,x,y ) =p ( t,y,x ) \qquad\mbox{almost everywhere } ( \varphi\times\varphi ).
\label{rev}%
\end{equation}
Then $T_{t}$ is self-adjoint. If we denote the closure of $\mathcal{L}$ by
$\bar{\mathcal{L}}$ (see, e.g., \cite{ethkur}, page~16) and the domain of
$\bar{\mathcal{L}}$ by $\mathcal{D} ( \bar{\mathcal{L}} ) $, then
$\bar{\mathcal{L}}$ is self-adjoint and negative semidefinite (since $T_{t}$
is a contraction). We denote by $(-\bar{\mathcal{L})}^{1/2}$ the canonical
positive semidefinite square root of $-\bar{\mathcal{L}}$ (\cite{rud},
Chapter~12). Let $\bar{\mathcal{D}}_{1/2}$ be the domain of $(-\bar{\mathcal{L}})^{1/2}$. Donsker and Varadhan \cite{donvar1}, Theorem~5, show under certain
conditions that $I$ defined by (\ref{rate}) has the following properties:
$I ( \mu ) <\infty$ if and only if $\mu\ll\varphi$ and $ (
d\mu/d\varphi ) ^{1/2}\in\bar{\mathcal{D}}_{1/2}$, and with
$f\doteq
d\mu/d\varphi$ and $g\doteq f^{1/2}$,
%
\begin{equation}
I ( \mu ) =\bigl\llVert (-\bar{\mathcal{L}})^{1/2}g\bigr\rrVert
^{2}, \label{dvr}%
\end{equation}
where $\llVert \cdot\rrVert $ denotes the $L^{2}$ norm with
respect to
$\varphi$. Typically, $\varphi$ is taken to be the invariant
distribution of
the process.

It should be noted that this explicit formula does not apply to one of the
simplest Markov processes, namely, continuous time Markov jump
processes with
bounded infinitesimal generators. Let $\mathcal{B} ( S ) $ be the
Borel $\sigma$-algebra on $S$ and let $\alpha ( x,\Gamma ) $
be a
transition kernel on $S\times\mathcal{B} ( S ) $. Let $B (
S ) $ denote the space of bounded Borel measurable functions on
$S$ and
let $q\in B ( S ) $ be nonnegative. Then
%
\begin{equation}
\mathcal{L}f ( x ) \doteq q ( x ) \int_{S} \bigl( f ( y )
-f ( x ) \bigr) \alpha ( x,dy ) \label{generator}%
\end{equation}
defines a bounded linear operator on $B ( S ) $ and $\mathcal{L}$
is the generator of a Markov process that can be constructed as
follows. Let
$ \{ X_{n},n\in%
\mathbb{N}
\} $ be a Markov chain in $S$ with transition probability
$\alpha ( x,\Gamma ) $, that is,
%
\begin{equation}
P ( X_{n+1}\in\Gamma|X_{0},X_{1},
\ldots,X_{n} ) =\alpha ( X_{n},\Gamma ) \label{transition}%
\end{equation}
for all $\Gamma\in\mathcal{B} ( S ) $ and $n\in%
\mathbb{N}
$. Let $\tau_{1},\tau_{2},\ldots$ be independent and exponentially distributed
with mean $1$, and independent of $ \{ X_{n},n\in%
\mathbb{N}
\} $. Define a sojourn time $s_{i}$ for each $i=1,2,\ldots$ by
%
\begin{equation}
q ( X_{i-1} ) s_{i}=\tau_{i}.
\label{soj}%
\end{equation}
Then
\[
X ( t ) =X_{n}\qquad\mbox{for }\sum_{i=1}^{n}s_{i}
\leq t<\sum_{i=1}^{n+1}s_{i}%
\]
(with the convention $%
{ \sum_{i=1}^{0}}
s_{i}=0$) defines a Markov process $ \{ X ( t ),t\in
{}[0,\infty) \} $ with infinitesimal generator $\mathcal{L}$,
and we
call this process a Markov jump process.

A very simple special case is as follows. Using the notation above, assume
$S= [ 0,1 ] $, $q\equiv1$ and for each $x\in [ 0,1 ] $,
$\alpha ( x,\cdot ) $ is the uniform distribution on $ [
0,1 ] $. The infinitesimal generator $\mathcal{L}$ defined in
(\ref{generator}) reduces to
\[
\mathcal{L}f ( x ) =\int_{0}^{1}f ( y ) \,dy-f ( x
),
\]
which is clearly self-adjoint with respect to Lebesgue measure. If $C$
is the
collection of all Dirac measures on $S$, then $C$ is closed under the topology
of weak convergence on $\mathcal{P} ( S ) $. Hence, a large
deviation upper bound would imply
%
\begin{equation}
\limsup_{T\rightarrow\infty}\frac{1}{T}\log P ( \eta_{T}\in C
) \leq-\inf_{\mu\in C}I ( \mu ). \label{eq1}%
\end{equation}
However, the probability that the very first exponential holding time is
bigger than $T$ is exactly $\exp \{ -T \} $, and when this happens,
the empirical measure is a Dirac measure located at some point that is
uniformly distributed on $ [ 0,1 ] $. Hence,
\[
\liminf_{T\rightarrow\infty}\frac{1}{T}\log P \bigl( \eta_{T}
( \cdot ) \in C \bigr) \geq\liminf_{T\rightarrow\infty}\frac
{1}{T}\log P
( \tau_{1}>T ) =-1.
\]
In fact, we will prove later that the rate function for the empirical measure
of this Markov jump process never exceeds $1$. However, if the upper bound
held with the function defined in (\ref{dvr}), one would have $I (
\delta_{a} ) =\infty$ for $a\in [ 0,1 ] $, and by
(\ref{eq1})
\[
\limsup_{T\rightarrow\infty}\frac{1}{T}\log P \bigl( \eta_{T}
( \cdot ) \in D \bigr) =-\infty,
\]
which is impossible.

This example shows that this type of Markov jump process is not covered by
\cite{donvar1,donvar3}. In fact, the transition function $P (
t,x,dy ) $ takes the form
\[
P ( t,x,dy ) =e^{-t}\delta_{x} ( dy ) + \bigl(
1-e^{-t} \bigr) 1_{ [ 0,1 ] } ( y ) \,dy,
\]
which means that we cannot find a reference probability measure $\lambda
$ on
$S$ such that $P ( t,x,\cdot ) $ has a density with respect to
$\lambda ( \cdot ) $ for almost all $x\in S$ and $t>0$, which
is a
violation to Condition \ref{cond_diff} used in \cite{donvar1,donvar3}, and
also violates the form of reversibility needed for (\ref{dvr}).

A condition such as Condition \ref{cond_diff} holds naturally for Markov
processes that possess a ``diffusive'' term
in the dynamics, which is not the case for Markov jump processes, and
the form
of the rate function given in (\ref{dvr}) will not be valid for these
processes either. The purpose of the current paper is to establish a large
deviation principle for the empirical measures of reversible Markov jump
processes, and to provide an explicit formula for the rate function
like the
one given in (\ref{dvr}). We also show why the boundedness of the rate
function results from the fact that tilting of the exponential holding times
with bounded relative entropy cost can be used for target measures that are
not absolutely continuous with respect to the invariant distribution.

Finally, we mention that \cite{chelu} evaluates (\ref{rate}) for certain
classes of measures when $\mathcal{L}$ is the generator of a jump Markov
process satisfying various conditions. However, it does not present an
expression for an arbitrary measure, and indeed in appears that the authors
are unaware that (\ref{dvr}) is not the correct rate function for such
processes, or that the large deviation principle had not been established.

The paper is organized as follows. Section~\ref{setup} presents our
assumptions on the process. In Section~\ref{state_ldp}, we state the main
result, Theorem~\ref{large dev}. The proof of Theorem~\ref{large dev} is
divided into two sections, Section~\ref{lpub} for the upper bound and
Section~\ref{lplb} for the lower bound. In the final section, we
discuss the special
feature of Markov jump processes that leads to the boundedness of the
rate function.

\section{Assumptions}
\label{setup}

Our first assumption is that the Polish state space $S$ is compact. While
compactness is not needed, it lets us focus on the novel features of the
problem. For standard techniques to deal with the noncompact case see, for example,
\cite{donvar3}.

A construction of Markov jump processes was given in the \hyperref[intro]{Introduction},
and we
continue to use the notation introduced there. The jump intensity $q$ in
(\ref{generator}) is assumed to be continuous on $S$, and there exist
$0<K_{1}\leq K_{2}<\infty$ such that
%
\begin{equation}
K_{1}\leq q ( x ) \leq K_{2}. \label{qk}%
\end{equation}

Reversibility seems necessary to obtain an explicit formula for the rate
function, and we will make such an assumption. Recall that $\mathcal
{D}$ is
the domain of $\mathcal{L}$.

\begin{condition}
$\mathcal{L}$ is self-adjoint (or reversible) under $\pi$ in the following
sense: for any $f,g\in\mathcal{D}$
%
\begin{equation}
\int_{S} \bigl( \mathcal{L}f ( x ) \bigr) g ( x ) \pi ( dx
) =\int_{S} \bigl( \mathcal{L}g ( x ) \bigr) f ( x ) \pi (
dx ). \label{reversible1}%
\end{equation}
\end{condition}

An equivalent condition for (\ref{reversible1}) to hold is the
``detailed balance'' condition, that is, for
$\pi$-a.e. $x,y\in S$
%
\begin{equation}
q ( x ) \alpha ( x,dy ) \pi ( dx ) =q ( y ) \alpha ( y,dx ) \pi ( dy ).
\label{reversible2}%
\end{equation}
Note that (\ref{reversible2}) directly implies $\int_{S} ( \mathcal
{L}%
f ( x )  ) \pi ( dx ) =0$ for all $f\in
\mathcal{D}$.

To ensure ergodicity of $X ( t ) $, we need several conditions on
the transition function $\alpha$ in (\ref{transition}). Recall that
$\mathcal{P} ( S ) $ is the metric space of probability
measures on
$S$ equipped with L\'evy--Prohorov metric, which is compatible with the topology
of weak convergence.

\begin{condition}
\label{cond1}$\alpha$ satisfies the Feller property. That is, $\alpha
(
x,\cdot ) \dvtx S\longmapsto\mathcal{P} ( S ) $ is
continuous in
$x$.
\end{condition}

\begin{remark}
The Feller property and the compactness of $S$ guarantee $\alpha$ has an
invariant distribution (\cite{dupell4}, Proposition~8.3.4), which we
denote by
$\widetilde{\pi}$. The boundedness of $q$ enables us to define a probability
measure $\pi$ according to
%
\begin{equation}
\pi ( A ) \doteq\frac{\int_{A}({1}/{q ( x )
})\widetilde{\pi} ( dx ) }{\int_{S}({1}/{q ( x ))
}\widetilde{\pi} ( dx ) }. \label{pi_pi_tilde}%
\end{equation}
Since $\widetilde{\pi}$ is invariant under $\alpha$, that is,
$\widetilde{\pi
} ( \cdot ) =\int_{S}\alpha ( x,\cdot ) \widetilde{\pi
} ( dx ) $, we have
\[
\int_{S} \bigl( \mathcal{L}f ( x ) \bigr) \pi ( dx ) =
\frac{1}{\int_{S}({1}/{q ( x ) })\widetilde{\pi} (
dx ) }\int_{S}\int_{S} \bigl[
f ( y ) -f ( x ) \bigr] \alpha ( x,dy ) \widetilde{\pi} ( dx ) =0.
\]
By Echeverria's theorem (\cite{ethkur}, Theorem~4.9.17), $\pi$ is an invariant
distribution of $X ( t ) $.
\end{remark}

\begin{condition}
\label{cond2}$\alpha$ satisfies the following transitivity condition. There
exist positive integers $l_{0}$ and $n_{0}$ such that for all $x$ and
$\zeta$
in $S$
\[
\sum_{i=l_{0}}^{\infty}\frac{1}{2^{i}}
\alpha^{ ( i ) } ( x,dy ) \ll\sum_{j=n_{0}}^{\infty}
\frac{1}{2^{j}}\alpha^{ (
j ) } ( \zeta,dy ),
\]
where $\alpha^{ ( k ) }$ denotes the $k$-step transition probability.
\end{condition}

\begin{remark}
\label{rmk2}Under this condition, $\widetilde{\pi}$ is the unique invariant
distribution of~$\alpha$ (\cite{dupell4}, Lemma~8.6.2). Thus, $\pi$
defined by
(\ref{pi_pi_tilde}) is the unique probability distribution that satisfies
$\int_{S} ( \mathcal{L}f ( x )  ) \pi ( dx )
=0$, and hence by \cite{ethkur}, Theorem~4.9.17, is the unique invariant
distribution of~$X ( t ) $.
\end{remark}

\begin{condition}
\label{cond2'}There exists an integer $N$ and a positive real number
$c$ such
that
\[
\alpha^{(N)} ( x,\cdot ) \leq c\widetilde{\pi} ( \cdot )
\]
for all $x\in S$.
\end{condition}

\begin{remark}
This type of assumption is common in the large deviation analysis of empirical
measures. See, for example, \cite{ellwyn}, Hypothesis 1.1.
\end{remark}

\begin{condition}
\label{cond3}The support of $\pi$ is $S$.
\end{condition}

\begin{remark}
\label{isolate}This condition guarantees that any probability measure
$\eta
\in\mathcal{P} ( S ) $ can be approximated by measures that are
absolutely continuous with respect to $\pi$. Indeed, given $\delta>0$ let
$ \{ x_{j},N_{j},j=1,\ldots,J \} $ be such that $J<\infty$,
$x_{j}\in N_{j}\in\mathcal{B} ( S ) $, the $N_{j}$ are disjoint,
$\bigcup_{j=1}^{J}N_{j}=S$, $\pi(N_{j})>0$ and $\sup \{
d(x_{i},y)\dvtx y\in
N_{j} \} \leq\delta$ for $j=1,\ldots,J$ (this can be done by an open
covering argument). Given any $\eta\in\mathcal{P} ( S ) $ and
$A\in\mathcal{B} ( S ) $, let
\[
\bar{\eta}^{\delta}(A)=\sum_{j=1}^{J}
\frac{\pi(A\cap N_{j})}{\pi(N_{j})} 
\eta(N_{j}).
\]
Then $\bar{\eta}^{\delta}$ is absolutely continuous with respect to $\pi$.
Since $\bar{\eta}^{\delta}(N_{j})=\eta(N_{j})$ and $\sup \{
d(x_{i},y)\dvtx y\in N_{j} \} \leq\delta$, $\bar{\eta}^{\delta}%
\rightarrow\eta$ in the weak topology as $\delta\rightarrow0$.
\end{remark}

\begin{remark}
Condition \ref{cond3} excludes the existence of transient states.
Although one
can obtain an LDP for $X ( t ) $ that has transient states, one
would end up with a rate function that depends on the initial state.
\end{remark}

\section{A large deviation principle}
\label{state_ldp}

\subsection{Definition of the rate function}
\label{rate_defn}

In this subsection, we define the rate function $I$. In later sections, we
prove that $I$ thus defined is the correct form of the large deviation rate
function for the empirical measures of the Markov jump processes. All
conditions stated in Section~\ref{setup} will be assumed throughout the rest
of the paper. We wish to study the large deviation principle for the empirical
measures $\eta_{T}\in\mathcal{P} ( S ) $ defined by (\ref{eta}).
Under compactness of $S$ and Condition \ref{cond1}, $\eta_{T}$
converges in
distribution to an invariant distribution of $\mathcal{L}$. As pointed
out in
Remark~\ref{rmk2}, $\pi$ is the unique invariant distribution of
$\mathcal{L}%
$, and thus $\eta_{T}$ converges in distribution to $\pi$. Let $H$ be the
collection of all distributions that are absolutely continuous with
respect to
$\pi$, that is,
%
\begin{equation}
H\doteq \bigl\{ \eta\in\mathcal{P} ( S ) \dvtx\eta\ll\pi \bigr\}.
\label{H}%
\end{equation}
For $\eta\in H$, and assuming that the integral is well defined, consider
\[
-\int_{S}\theta^{1/2} ( x ) \mathcal{L} \bigl(
\theta^{1/2} ( x ) \bigr) \pi ( dx ),
\]
where $\theta=d\eta/d\pi$. This is a rewriting of $\Vert(-\mathcal{\bar
{L}%
)}^{1/2}g\Vert^{2}$ in (\ref{dvr}). By inserting the form of $\mathcal
{L}$ from
(\ref{generator}), we obtain the candidate rate function
%
\begin{equation}
I ( \eta ) =\int_{S}q ( x ) \eta ( dx ) -\int
_{S\times S}\theta^{1/2} ( x ) \theta^{1/2} ( y ) q (
x ) \alpha ( x,dy ) \pi ( dx ). \label{I1}%
\end{equation}

Note that by applying (\ref{reversible2}) and using the Cauchy--Schwarz
inequality, one can prove that $I$ defined by (\ref{I1}) is nonnegative.
Recall that $K_{2}$ is the upper bound of $q$ as in (\ref{qk}), and
thus $I$
is bounded above by $K_{2}$. In addition, it is straightforward to show that
$I$ is convex on $H$.

We want to extend the definition of $I$ to all measures in $\mathcal
{P} (
S ) $. As pointed out in Remark~\ref{isolate}, $H$ is dense in
$\mathcal{P} ( S ) $ under the topology of weak convergence. Hence,
we can extend the definition of $I$ to all of $\mathcal{P} ( S
) $
via lower semicontinuous regularization with respect to the topology of weak
convergence. Thus, if $\eta_{n}\rightarrow\eta$ weakly and $ \{ \eta
_{n} \} \in H$, $\liminf_{n\rightarrow\infty}I ( \eta_{n} )
\geq I ( \eta ) $, and equality holds for at least one such
sequence. This extension guarantees that the extended $I$ is convex, lower
semicontinuous and bounded above by $K_{2}$ on all of $\mathcal{P} (
S ) $. The compactness of $S$ and the lower semicontinuity of $I$ ensure
that $I$ has compact level sets. Being a nonnegative, lower semicontinuous
function with compact level sets, $I$ indeed is a valid large deviation
rate function.

We have finished the definition of the rate function $I$, and are now
ready to
state the large deviation principle.

\subsection{A large deviation principle}

Our main result is the following.

\begin{theorem}
\label{large dev} Let $X ( t ) $ be a Markov jump process
satisfying all the assumptions in Section~\ref{setup}. Let $I$ be
defined as
in Section~\ref{rate_defn}. Then the large deviation bounds (\ref{ldl}) and
(\ref{ldu}) hold.
\end{theorem}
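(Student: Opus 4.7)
The plan is to prove (\ref{ldu}) and (\ref{ldl}) separately, adapting the Donsker--Varadhan methodology \cite{donvar1} to a setting where Condition \ref{cond_diff} fails, $I$ is bounded above by $K_{2}$, and measures singular with respect to $\pi$ can carry finite rate.

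For the upper bound, I would begin from the observation that since $\mathcal{L}$ is bounded, for every strictly positive $u\in\mathcal{D}$ the process
\[
M_{t}=\frac{u(X(t))}{u(X(0))}\exp\left\{-\int_{0}^{t}\frac{\mathcal{L}u(X(s))}{u(X(s))}\,ds\right\}
\]
is a true martingale. This gives $\mathbb{E}\exp\{-T\int_{S}(\mathcal{L}u/u)\,d\eta_{T}\}\leq (\sup u)/(\inf u)$, and standard Chebyshev arguments yield (\ref{ldu}) with the abstract rate function $I^{*}(\eta)=\sup_{u\in\mathcal{D},u>0}\int(-\mathcal{L}u/u)\,d\eta$. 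Identifying $I^{*}$ with $I$ on $\mathcal{P}(S)$ is the technical heart: on $H$ with $\theta=d\eta/d\pi$, substituting $u=(\theta\vee\varepsilon)^{1/2}$ and using (\ref{reversible2}) to symmetrize the kernel $q(x)\alpha(x,dy)\pi(dx)$, combined with Cauchy--Schwarz, recovers (\ref{I1}) as $\varepsilon\downarrow 0$ and gives $I^{*}=I$ on $H$. For $\eta$ with a nonzero singular part with respect to $\pi$, sharper test functions $u$ concentrated near the singular support yield the matching inequality $I^{*}(\eta)\geq I(\eta)$ (for instance, at a Dirac mass $\delta_{a}$ both sides evaluate to $q(a)(1-\alpha(a,\{a\}))$), while the reverse bound $I^{*}(\eta)\leq I(\eta)$ comes from lower semicontinuity of $I^{*}$ along approximating sequences in $H$.

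For the lower bound, I would use a change of measure argument. For $\eta\in H$ with $\theta$ bounded between two positive constants, define the Doob $\theta^{1/2}$-transform of the generator, namely a tilted Markov jump process with intensity and kernel satisfying
\[
\tilde{q}(x)\tilde{\alpha}(x,dy)=\frac{\theta^{1/2}(y)}{\theta^{1/2}(x)}\,q(x)\,\alpha(x,dy).
\]
Detailed balance (\ref{reversible2}) implies $\eta$ is reversibly invariant for the tilted process, and boundedness of $\theta^{1/2}$ preserves Conditions \ref{cond1}--\ref{cond3} for the tilted kernel, so $\eta_{T}\to\eta$ weakly under the tilted law. The log Radon--Nikodym derivative on $[0,T]$ telescopes at the jump times and has $P^{\text{tilt}}$-mean $TI(\eta)+O(1)$, from which the Laplace/variational lemma yields (\ref{ldl}) for neighborhoods of such $\eta$.

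The principal obstacle is transferring the lower bound from $H$ to $\eta\in\mathcal{P}(S)\setminus H$; here the ground-state transform is undefined when $\theta$ vanishes, and the Dirac example in the Introduction shows that the resulting $O(1)$ probabilities require a qualitatively different tilt. My approach is to add a holding-time tilt: replace the original rate $q(X_{n})$ at selected visited states by $\varepsilon q(X_{n})$, slowing the clock so that the process nearly freezes at chosen atoms. The relative-entropy rate contributed by this slowdown, $q(X(s))[\varepsilon\log\varepsilon-\varepsilon+1]\to q(X(s))$ as $\varepsilon\downarrow 0$, exactly matches the $K_{2}$ bound on $I$ carried by singular parts. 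Combining this holding-time tilt (aimed at the discrete singular part of $\eta$, which by Remark \ref{isolate} is all of the singular mass) with the spatial $\theta^{1/2}$-transform (aimed at the absolutely continuous part $\eta_{\mathrm{ac}}$), and invoking the lsc-regularization property of $I$ to select approximating sequences $\eta_{n}\in H$ with $I(\eta_{n})\to I(\eta)$, completes (\ref{ldl}) on all of $\mathcal{P}(S)$.
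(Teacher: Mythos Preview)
Your lower bound is essentially the paper's argument in different clothing: the Doob $\theta^{1/2}$-transform you write down coincides exactly with the controlled process the paper constructs in Section~\ref{lplb} (the paper's kernel $p(x,dy)$ equals your $\tilde{\alpha}(x,dy)$, and its tilted rate $A\kappa(x)q(x)$ equals your $\tilde{q}(x)$). The paper phrases this via a stochastic-control representation (Lemma~\ref{rep_ran}) rather than a direct Girsanov computation, but the telescoping log-likelihood you describe is the same calculation. Your additional holding-time tilt for singular targets is unnecessary: once $I$ is defined by lower semicontinuous regularization from $H$, the lower bound for arbitrary $\eta$ follows immediately from the lower bound for $\eta\in H$ with $\delta\leq\theta\leq 1/\delta$, which is exactly how the paper proceeds at the start of Section~\ref{lplb}.

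Your upper bound takes a genuinely different route and contains a real gap. The martingale/Chebyshev argument is valid here (since $\mathcal{L}$ is bounded) and delivers the LDP upper bound with the abstract Donsker--Varadhan functional $I^{*}$ from (\ref{rate}), not with $I$. Because the theorem asserts the LDP with rate $I$, you must establish $I^{*}\geq I$ on all of $\mathcal{P}(S)$; the inequality $I^{*}\leq I$ (which you correctly derive from lower semicontinuity) points the wrong way for the upper bound. On $H$ your Cauchy--Schwarz argument using detailed balance does give $I^{*}=I$, but for $\eta\notin H$ the assertion that ``sharper test functions concentrated near the singular support'' yield $I^{*}(\eta)\geq I(\eta)$ is not a proof: $I(\eta)$ is defined only implicitly as an infimum of liminfs over approximating sequences in $H$, and matching this value by a supremum over $u$ in (\ref{rate}) requires a nontrivial construction even for finite convex combinations of Dirac masses with an absolutely continuous part. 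The paper sidesteps this identification entirely. Instead of passing through $I^{*}$, it proves the Laplace upper bound directly via a weak-convergence/control representation (Section~\ref{lpub}), partitioning on the number of jumps $R_{T}$ and showing in Lemma~\ref{DV rate} that $I(\eta)$ equals the minimal asymptotic cost $AR(\mu\Vert[\mu]_{1}\otimes\alpha)+\int\ell(u)\,\xi(dx,du)$ over admissible $(A,\mu,\xi)$. That approach is heavier to set up but never needs to evaluate (\ref{rate}) on singular measures.
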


To prove Theorem~\ref{large dev}, it suffices to show the equivalent Laplace
principle (\cite{dupell4}, Theorem~1.2.3). Specifically, we establish
that for
any bounded continuous function $F\dvtx\mathcal{P} ( S )
\rightarrow%
\mathbb{R}
$
%
\begin{equation}
\lim_{T\rightarrow\infty}-\frac{1}{T}\log E \bigl[ \exp \bigl\{ -TF (
\eta_{T} ) \bigr\} \bigr] =\inf_{\eta\in\mathcal{P} ( S )
} \bigl[ F ( \eta
) +I ( \eta ) \bigr]. \label{laplace}%
\end{equation}
By adding a constant to both sides of (\ref{laplace}), we can assume
$F\geq0$
and do that for the rest of the paper. The proof is based on a weak
convergence approach and is split into two parts: a Laplace upper bound
and a
Laplace lower bound.

Relative entropy plays a key role in the proof, and we hence state the
definition and a few important properties. Details can be found in
\cite{dupell4}.

\begin{definition}
\label{defn1}Let $ ( \mathcal{V},\mathcal{A} ) $ be a measurable
space. For $\vartheta\in\mathcal{P} ( \mathcal{V} ) $, the
\textit{relative entropy }$R ( \cdot\llVert \vartheta
)
$ is a mapping from $\mathcal{P} ( \mathcal{V} ) $ into the
extended real numbers. It is defined by
\[
R ( \gamma\llVert \vartheta ) \doteq\int_{\mathcal
{V}%
}
\biggl( \log\frac{d\gamma}{d\vartheta} \biggr) \,d\gamma
\]
when $\gamma\in$ $\mathcal{P} ( \mathcal{V} ) $ is absolutely
continuous with respect to $\vartheta$ and $\log \,d\gamma/d\vartheta$ is
integrable with respect to $\gamma$. Otherwise, we set $R ( \gamma
\llVert \vartheta  ) \doteq\infty$.
\end{definition}

If $\mathcal{V}$ is a Polish space and $\mathcal{A}$ the associated
$\sigma
$-algebra, then $R ( \cdot\llVert \cdot  ) $ is
nonnegative, jointly convex and jointly lower semicontinuous [with
respect to
the weak topology on $\mathcal{P} ( \mathcal{V} ) ^{2}$]. We state
the following two properties of relative entropy.

\begin{lemma}[(Variational formula)]
\label{var_for} Let $ ( \mathcal{V},\mathcal
{A}%
) $ be a measurable space, $k$ a bounded measurable function mapping
$\mathcal{V}$ into $%
\mathbb{R}
$, and $\vartheta$ a probability measure on~$\mathcal{V}$. The following
conclusions hold:
\begin{longlist}[(a)]
\item[(a)] We have the variational formula
%
\begin{equation}
-\log\int_{\mathcal{V}}e^{-k}\,d\vartheta=\inf
_{\gamma\in\mathcal{P} (
\mathcal{V} ) } \biggl\{ R ( \gamma\llVert \vartheta )
+\int_{\mathcal{V}}k\,d\gamma \biggr\}. \label{var_f}%
\end{equation}

\item[(b)] The infimum in (\ref{var_f}) is attained uniquely at $\gamma_{0}$ defined
by
\[
\frac{d\gamma_{0}}{d\vartheta} ( x ) \doteq e^{-k ( x )
}\Big/\int_{\mathcal{V}}e^{-k}\,d
\vartheta.
\]
\end{longlist}
\end{lemma}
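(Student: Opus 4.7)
The plan is to reduce the variational formula to a single measure-change identity and then exploit nonnegativity of relative entropy. First I would observe that since $k$ is bounded the normalizing constant $Z \doteq \int_{\mathcal{V}} e^{-k}\,d\theta$ satisfies $0 < Z < \infty$, so the measure $\gamma_0$ in (b) is a well-defined element of $\mathcal{P}(\mathcal{V})$, and moreover $d\gamma_0/d\theta$ is strictly positive and bounded above and below. A direct computation using $\log(d\gamma_0/d\theta) = -k - \log Z$ then gives
\[
R(\gamma_0 \| \theta) + \int_{\mathcal{V}} k\, d\gamma_0 = \int_{\mathcal{V}} \bigl(-k - \log Z\bigr)\, d\gamma_0 + \int_{\mathcal{V}} k\, d\gamma_0 = -\log Z,
\]
which shows that the infimum in (\ref{var_f}) is at most $-\log Z$ and is attained at $\gamma_0$.

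Next I would establish the matching lower bound by proving the identity
\[
R(\gamma \| \theta) + \int_{\mathcal{V}} k\, d\gamma = R(\gamma \| \gamma_0) - \log Z
\]
for every $\gamma \in \mathcal{P}(\mathcal{V})$. If $R(\gamma\|\theta) = \infty$ the bound (\ref{var_f}) is trivial since $k$ is bounded, so I may assume $\gamma \ll \theta$. Because $d\gamma_0/d\theta$ is strictly positive, $\gamma \ll \theta$ is equivalent to $\gamma \ll \gamma_0$, and on the support of $\gamma$ the chain rule gives $d\gamma/d\gamma_0 = (d\gamma/d\theta)\, e^{k}\, Z$. Taking logs, integrating against $\gamma$, and using boundedness of $k$ to justify splitting the integral then yields the displayed identity. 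Combining this with $R(\gamma \| \gamma_0) \geq 0$ gives $R(\gamma\|\theta) + \int k\, d\gamma \geq -\log Z$ for all $\gamma$, and the strict positivity of $R(\cdot\|\gamma_0)$ off the diagonal establishes uniqueness of the minimizer.

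The only place requiring mild care is the measure-theoretic bookkeeping in the identity, in particular verifying that the cancellation between $\int \log(d\gamma/d\theta)\,d\gamma$ and $\int \log(d\gamma/d\gamma_0)\,d\gamma$ is legitimate; this reduces to integrability of $\log(d\gamma_0/d\theta) = -k - \log Z$ against $\gamma$, which is immediate because $k$ is bounded and $\gamma$ is a probability measure. There is no deeper obstacle: the result is essentially an application of Gibbs' inequality packaged as a change-of-reference-measure computation, and the boundedness hypothesis on $k$ is exactly what is needed to avoid any integrability pathologies in the manipulations.
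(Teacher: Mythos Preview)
Your argument is correct and is the standard proof of the Donsker--Varadhan variational formula. Note, however, that the paper does not actually supply its own proof of this lemma: it is stated as a preliminary fact about relative entropy with the remark ``Details can be found in \cite{dupell4}'' (specifically Proposition 1.4.2 there). So there is nothing in the paper to compare against; your proof simply fills in what the authors chose to cite.
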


\begin{theorem}[(Chain rule)]
\label{chain_rule} Let $\mathcal{X}$ and $\mathcal{Y}$ be Polish
spaces and $\beta$ and $\gamma$ probability measures on $\mathcal
{X\times Y}$.
We denote by $[\beta]_{1}$ and $[\gamma]_{1}$ the first marginals of
$\beta$
and $\gamma$ and by $\beta ( dy|x ) $ and $\gamma (
dy|x ) $ the stochastic kernels on $\mathcal{Y}$ given $\mathcal
{X}$ for
which we have the decompositions
\[
\beta ( dx\times dy ) =[\beta]_{1} ( dx ) \otimes \beta ( dy|dx )
\quad\mbox{and}\quad\gamma ( dx\times dy ) =[\gamma]_{1} ( dx ) \otimes\gamma (
dy|dx ).
\]
Then the function mapping $x\in\mathcal{X}\rightarrow R ( \beta (
\cdot|x ) \llVert \gamma ( \cdot|x )   ) $
is measurable and
\[
R ( \beta\llVert \gamma ) =R \bigl( [\beta ]_{1}
\llVert [\gamma]_{1} \bigr) +\int_{\mathcal{X}}R
\bigl( \beta ( \cdot|x ) \llVert \gamma ( \cdot|x )  \bigr) [
\beta]_{1} ( dx ).
\]
\end{theorem}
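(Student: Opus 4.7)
My plan is to exploit the Polish space hypothesis, which supplies regular conditional distributions and thereby makes the kernels $\beta(\cdot|x)$ and $\gamma(\cdot|x)$ bona fide Markov kernels with jointly measurable densities. I would split the argument into a finite-entropy case $R(\beta\|\gamma)<\infty$ and an infinite-entropy case, with the former carrying the core identity.

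In the finite case, set $f \doteq d\beta/d\gamma$, which exists since $\beta\ll\gamma$, and define the projected density
\[
[f]_{1}(x) \doteq \int_{\mathcal{Y}} f(x,y)\,\gamma(dy|x).
\]
By Fubini and the disintegration of $\gamma$, the function $[f]_1$ is the Radon-Nikodym derivative $d[\beta]_1/d[\gamma]_1$. On the full-$[\beta]_1$-measure set $\{[f]_1>0\}$, the kernel $\beta(\cdot|x)$ is absolutely continuous with respect to $\gamma(\cdot|x)$ with density $f(x,\cdot)/[f]_1(x)$. Writing $\log f(x,y) = \log[f]_1(x) + \log\bigl(f(x,y)/[f]_1(x)\bigr)$ on this set and integrating against $\beta$ yields the claimed decomposition; the interchange of integrals is justified by splitting $\log f$ into positive and negative parts, the negative part being integrable since $r\log r\geq -1/e$, and the positive part controlled by $R(\beta\|\gamma)<\infty$.

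For the measurability claim, joint measurability of $(x,y)\mapsto f(x,y)/[f]_1(x)$ together with the Markov kernel property of $\gamma(\cdot|\cdot)$ shows that $x\mapsto \int \phi\,d\beta(\cdot|x)$ is measurable for bounded measurable $\phi$, and a monotone truncation of the logarithm transfers this measurability to $x\mapsto R(\beta(\cdot|x)\|\gamma(\cdot|x))$. For the infinite case, if $\beta\not\ll\gamma$ then either $[\beta]_1\not\ll[\gamma]_1$, so that $R([\beta]_1\|[\gamma]_1)=\infty$, or $\beta(\cdot|x)\not\ll\gamma(\cdot|x)$ on a set of positive $[\beta]_1$-measure, so that the conditional-entropy integral is $+\infty$; and if $\beta\ll\gamma$ but $R(\beta\|\gamma)=\infty$, truncating $f_n\doteq f\wedge n$, applying the already-established finite identity, and letting $n\to\infty$ by monotone convergence forces the right-hand side to diverge as well. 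The main technical obstacle I anticipate is the bookkeeping of $[\beta]_1$-null sets where $[f]_1$ vanishes or the conditional kernels fail absolute continuity; once those are handled, the remaining steps are standard Fubini manipulations.
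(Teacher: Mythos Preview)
The paper does not supply its own proof of this theorem; it is stated as a standard property of relative entropy with the remark that ``Details can be found in \cite{dupell4}.'' Your proposal follows the standard disintegration argument one finds in that reference and is essentially correct.

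One minor point: in your infinite-entropy sub-case with $\beta\ll\gamma$, the truncation $f_n\doteq f\wedge n$ does not directly produce probability measures, and passing to the limit in the chain-rule identity after normalization requires controlling the marginal and conditional entropies simultaneously, which is more delicate than you suggest. A cleaner way to close this case is the contrapositive: if both $R([\beta]_1\|[\gamma]_1)$ and $\int R(\beta(\cdot|x)\|\gamma(\cdot|x))\,[\beta]_1(dx)$ are finite, then the same $r\log r\geq -1/e$ bound applied to $[f]_1$ and to the conditional density $f(x,\cdot)/[f]_1(x)$ shows that $\log[f]_1$ and $\log\bigl(f/[f]_1\bigr)$ each lie in $L^1(\beta)$, whence $\log f\in L^1(\beta)$ and $R(\beta\|\gamma)<\infty$.
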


We devote the next two sections to proving the Laplace upper bound and the
Laplace lower bound, respectively.

\section{Proof of the Laplace upper bound}
\label{lpub}

In this section, we prove the Laplace upper bound part of (\ref{laplace}),
that is,
%
\begin{equation}
\liminf_{T\rightarrow\infty}-\frac{1}{T}\log E \bigl[ \exp \bigl\{ -TF
( \eta_{T} ) \bigr\} \bigr] \geq\inf_{\eta\in\mathcal{P} (
S ) } \bigl[ F
( \eta ) +I ( \eta ) \bigr]. \label{upper}%
\end{equation}

Recalling the construction of $X(t)$ in the \hyperref[intro]{Introduction}, we define a random
integer $R_{T}$ as the index when the total ``waiting
time'' first exceeds $T$, that is,
%
\begin{equation}
\sum_{i=1}^{R_{T}-1}s_{i}\leq T<\sum
_{i=1}^{R_{T}}s_{i}.
\label{R_T}%
\end{equation}
Then the empirical measure $\eta_{T}$ can be written as
%
\begin{eqnarray}
\label{eta_T}%
\eta_{T} ( \cdot ) & =&\frac{1}{T}\int
_{0}^{T}\delta_{X (
t ) } ( \cdot ) \,dt
\nonumber
\\
& =&\frac{1}{T} \Biggl[ \sum_{i=1}^{R_{T}-1}
\delta_{X_{i-1}} ( \cdot ) s_{i}+\delta_{X_{R_{T}-1}} ( \cdot )
\Biggl( T-\sum_{i=1}^{R_{T}-1}s_{i}
\Biggr) \Biggr]
\\
& =&\frac{1}{T} \Biggl[ \sum_{i=1}^{R_{T}-1}
\delta_{X_{i-1}} ( \cdot ) \frac{\tau_{i}}{q ( X_{i-1} ) }+\delta _{X_{R_{T}-1}%
} (
\cdot ) \Biggl( T-\sum_{i=1}^{R_{T}-1}
\frac{\tau_{i}}{q (
X_{i-1} ) } \Biggr) \Biggr].\nonumber
\end{eqnarray}
The proof of (\ref{upper}) will be partitioned into two cases:
$R_{T}/T>C$ and
$0\leq R_{T}/T\leq C$, where $C$ will be sent to $\infty$ after sending
$T\rightarrow\infty$.
\subsection{The case $R_{T}/T>C$}

Let $F\dvtx\mathcal{P}(S)\rightarrow\mathbb{R}$ be nonnegative and continuous.
Then since $F\geq0$,
\begin{eqnarray*}
-\frac{1}{T}\log E \bigl[ 1_{ \{  ( C,\infty )  \}
}(R_{T}/T)e^{-TF(\eta_{T})}
\bigr] & \geq&-\frac{1}{T}\log P \Biggl\{ \sum_{i=1}^{ \lfloor TC \rfloor+1}s_{i}
\leq T \Biggr\}
\\
& =&-\frac{1}{T}\log P \Biggl\{ \sum_{i=1}^{ \lfloor TC \rfloor
+1}
\frac{\tau_{i}}{q ( X_{i-1} ) }\leq T \Biggr\}
\\
& \geq&-\frac{1}{T}\log P \Biggl\{ \sum_{i=1}^{ \lfloor TC \rfloor
+1}
\tau_{i}\leq K_{2}T \Biggr\}.
\end{eqnarray*}
Using Chebyshev's inequality, for any $\alpha\in ( 0,\infty ) $,
\begin{eqnarray*}
P \Biggl\{ \sum_{i=1}^{ \lfloor TC \rfloor+1}
\tau_{i}\leq K_{2}T \Biggr\} & =&P \bigl\{ e^{-\alpha\sum_{i=1}^{ \lfloor
TC \rfloor+1}\tau_{i}}
\geq e^{-\alpha K_{2}T} \bigr\}
\\
& \leq& e^{\alpha K_{2}T}E \bigl[ e^{-\alpha\sum_{i=1}^{ \lfloor
TC \rfloor+1}\tau_{i}} \bigr]
\\
& =&e^{\alpha K_{2}T}e^{ (  \lfloor TC \rfloor+1 )
\log({1}/{1(+\alpha)})}.
\end{eqnarray*}
For the last equality, we have used that if $\tau$ is exponentially distributed
with mean $1$ then $Ee^{a\tau}=1/ ( 1-a ) $ for any $a\in (
-\infty,1 ) $. Combining the last two inequalities,
\begin{eqnarray*}
&&\liminf_{T\rightarrow\infty}  -\frac{1}{T}\log E \bigl[
1_{ \{  (
C,\infty )  \} }(R_{T}/T)\cdot\exp\bigl\{-TF(\eta_{T})\bigr
\} \bigr]
\\
&&\qquad \geq\sup_{\alpha\in ( 0,\infty ) } \bigl[ -K_{2}\alpha +C\log ( 1+
\alpha ) \bigr]
\\
&&\qquad =-C+C\log C+K_{2}-C\log K_{2}.
\end{eqnarray*}
Note that $-C+C\log C+K_{2}-C\log K_{2}\rightarrow\infty$ as
$C\rightarrow
\infty$.

\subsection{The case \texorpdfstring{$0\leq R_{T}/T\leq C$}{$0<=R_{T}/T<=C$}}

\subsubsection{A stochastic control representation}

In this case, we adapt a standard weak convergence argument; see \cite{dupell4}
for details. Specifically, we first\vadjust{\goodbreak} establish a stochastic control
representation for the left-hand side of (\ref{laplace}) and then
obtain a
lower bound for the limit as $T\rightarrow\infty$. In the
representation, all
distributions can be perturbed from their original form, but such a
perturbation pays a relative entropy cost. We distinguish the new
distributions and random variables by an overbar. In the following, the barred
quantities are constructed analogously to their unbarred counterparts. Hence,
$\bar{\tau}_{i}$ and $\bar{X}_{i}$ are chosen recursively according to
stochastic kernels $\bar{\sigma}_{i} ( \cdot ) $ and $\bar
{\alpha
}_{i} ( \cdot ) $, that is, $\bar{\sigma}_{i} ( \cdot
) $
and $\bar{\alpha}_{i} ( \cdot ) $ are conditional distributions
that can depend on the whole past. Specifically, $\bar{\sigma}_{i} (
\cdot ) $ depends on $ \{ \bar{X}_{0},\bar{\tau}_{1},\bar{X}%
_{1},\bar{\tau}_{2},\ldots,\bar{X}_{i-1} \} $ and $\bar{\alpha}%
_{i} ( \cdot ) $ depends on $ \{ \bar{X}_{0},\bar{\tau}%
_{1},\bar{X}_{1},\bar{\tau}_{2},\ldots,\bar{X}_{i-1},\bar{\tau}_{i}
\} $;
$\bar{s}_{i}$ is defined by (\ref{soj}) using $\bar{X}_{i}$ and $\bar
{\tau
}_{i}$; $\bar{R}_{T}$ is defined by (\ref{R_T}) using $\bar{s}_{i}$; and
$\bar{\eta}_{T}$ is defined by (\ref{eta_T}) using $\bar{X}_{i}$, $\bar
{\tau
}_{i}$ and $\bar{R}_{T}$. It will be sufficient to consider any deterministic
sequence $ \{ r_{T} \} $ such that $0\leq r_{T}/T\leq C$, and
$r_{T}/T\rightarrow A$ for some $A\in [ 0,C ] $ as
$T\rightarrow
\infty$. We restrict consideration to controlled processes such that
$\bar
{R}_{T}=r_{T}$ by placing an infinite cost penalty on controls which
lead to
any other outcome with positive probability. Let $\mathbf{1} (
A ) $ denote the indicator function of a set $A$, and recall that our
convention is $0\cdot\infty=0$. By applying \cite{dupell4}, Proposition~4.5.1,
and Theorem~\ref{chain_rule}, the following is valid:
%
\begin{eqnarray}\label{rep_gen}
&& -\frac{1}{T}\log E \bigl[ \exp \bigl\{ -TF(\eta_{T})-T\cdot
\infty \cdot\mathbf{1} ( r_{T}\neq R_{T} ) \bigr\} \bigr]
\nonumber
\\[-1pt]
\\[-15pt]
\nonumber
&&\qquad =-\frac{1}{T}\log E \Biggl[ \exp \Biggl\{ -TF(\eta_{T})-T\cdot
\infty \cdot\mathbf{1} \Biggl( \Biggl\{ \sum_{i=1}^{r_{T}-1}s_{i}
\leq T<\sum_{i=1}^{r_{T}}s_{i} \Biggr
\} ^{c} \Biggr) \Biggr\} \Biggr]
\\
\label{rep_gen1}&&\qquad =\inf E \Biggl[ F ( \bar{\eta}_{T} ) +\infty\cdot\mathbf{1}
\Biggl( \Biggl\{ \sum_{i=1}^{r_{T}-1}
\bar{s}_{i}\leq T<\sum_{i=1}^{r_{T}}%
\bar{s}_{i} \Biggr\} ^{c} \Biggr)
\nonumber
\\[-8pt]
\\[-8pt]
\nonumber
&&\hspace*{99pt}{} +\frac{1}{T}\sum
_{i=1}^{r_{T}} \bigl[ R ( \bar{
\alpha}_{i-1}\llVert \alpha ) +R ( \bar{
\sigma}_{i}\llVert \sigma ) \bigr] \Biggr],
\end{eqnarray}
where the infimum is taken over all control measures $ \{ \bar
{\alpha
}_{i},\bar{\sigma}_{i} \} $. Since in Section~\ref{lplb} we will
prove a
similar but more involved representation formula, Lemma~\ref{rep_ran},
we omit
the proof of this representation. Due to the restriction $\bar{R}_{T}=r_{T}$,
one can write $\bar{\eta}_{T}$ as
%
\begin{equation}
\bar{\eta}_{T} ( \cdot ) =\frac{1}{T} \Biggl[ \sum
_{i=1}^{r_{T}%
-1}\delta_{\bar{X}_{i-1}} ( \cdot )
\frac{\bar{\tau
}_{i}}{q (
\bar{X}_{i-1} ) }+\delta_{\bar{X}_{r_{T}-1}} ( \cdot ) \Biggl( T-\sum
_{i=1}^{r_{T}-1}\frac{\bar{\tau}_{i}}{q ( \bar{X}%
_{i-1} ) } \Biggr) \Biggr].\hspace*{-15pt}
\label{eta_bar}%
\end{equation}

In the following proof, we repeatedly extract further subsequences of
$T$. To
keep the notation concise, we abuse notation and use $T$ to denote all
subsequences. Note also that in proving a lower bound for (\ref
{rep_gen}) it
suffices to consider a subsequence of $T$ such that
%
\begin{equation}
\sup_{T}-\frac{1}{T}\log E \bigl[ \exp \bigl\{ -TF(
\eta_{T})-T\cdot\infty \cdot\mathbf{1} ( r_{T}\neq
R_{T} ) \bigr\} \bigr] <\infty. \label{finiteness}%
\end{equation}
We assume this condition for the rest of this subsection.

The relative entropy cost in (\ref{rep_gen1}) includes two parts,
$RE_{T}%
^{1}\doteq\break \frac{1}{T}\sum_{i=1}^{r_{T}}R ( \bar{\alpha}_{i-1}\llVert
\alpha  ) $ and $RE_{T}^{2}\doteq\frac{1}{T}\sum_{i=1}^{r_{T}%
}R ( \bar{\sigma}_{i}\llVert \sigma  ) $.\vspace*{1pt} We will prove
that for any sequence of controls $ \{ \bar{\alpha}_{i},\bar{\sigma}
_{i} \} $ in (\ref{rep_gen1})
%
\begin{equation}
\liminf_{T\rightarrow\infty}E \bigl[ F ( \bar{\eta}_{T} )
+RE_{T}^{1}+RE_{T}^{2} \bigr] \geq\inf
_{\eta\in\mathcal{P} ( S )
} \bigl[ F ( \eta ) +I ( \eta ) \bigr].
\label{lower1}%
\end{equation}
Toward this end, it is enough to show that along any subsequence of $T$ such
that $r_{T}/T\rightarrow A$, we can extract a further subsequence along which
(\ref{lower1}) holds. In addition, it suffices to consider only
functions $F$
that besides being nonnegative, are also lower semicontinuous and
convex. This
restriction is valid since $I$ is convex and lower semicontinuous, and follows
a standard argument in the large deviation literature. The interested reader
can find the details in~\cite{liu2}.

In light of (\ref{rep_gen1}) and (\ref{finiteness}), we assume without
loss of
generality
%
\begin{equation}
\sup_{T}E \bigl[ F ( \bar{\eta}_{T} )
+RE_{T}^{1}+RE_{T}%
^{2} \bigr]
<\infty. \label{uni_finite}%
\end{equation}
Since the proof of (\ref{lower1}) is lengthy, we analyze each term on
the left-hand side of~(\ref{lower1}) separately in the following subsections.

\subsubsection{The term $RE_{T}^{1}$}
\label{term1}

The cost $RE_{T}^{1}$ comes from distorting the dynamics of the embedded
Markov chain, and indeed the analysis gives a very similar conclusion
to that
of an ordinary Markov chain (\cite{dupell4}, Chapter~8). For any probability
measure $\nu$ on $S\times S$, we will use notation $ [ \nu ] _{1}$
and $ [ \nu ] _{2}$ to denote the first and second marginals of
$\nu$. We have the following result for $RE_{T}^{1}$.

\begin{lemma}
\label{re1} Consider any sequence of controls $ \{ \bar{\alpha}_{i}%
,\bar{\sigma}_{i} \} $ in (\ref{rep_gen1}) such that~(\ref{uni_finite})
holds. Along any subsequence of $T$ satisfying $r_{T}/T\rightarrow A$, define
a sequence of random probability measures on $S\times S$ via
\[
\mu_{T} ( dx,dy ) \doteq\frac{1}{r_{T}}\sum
_{i=1}^{r_{T}}%
\delta_{\bar{X}_{i-1}} ( dx )
\bar{\alpha}_{i-1} ( dy ) .
\]
Then one can extract a further subsequence such that $E\mu_{T}$
converges in
distribution to a probability measure $\tilde{\mu}$ on $S\times S$, and
\[
\liminf_{T\rightarrow\infty}E \bigl[ RE_{T}^{1} \bigr]
\geq AR \bigl( \tilde{\mu}\llVert [ \tilde{\mu} ] _{1}\otimes\alpha
 \bigr).
\]
Furthermore, if $A>0$ then $\tilde{\mu}$ satisfies
%
\begin{equation}
[ \tilde{\mu} ] _{1}= [ \tilde{\mu} ] _{2}.
\label{const1}%
\end{equation}
\end{lemma}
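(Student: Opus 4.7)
My approach is to relate the pathwise entropy cost $RE_T^1$ to the relative entropy $R(\mu_T\,\|\,[\mu_T]_1\otimes\alpha)$ through joint convexity of $R$, pass to expectation with a second use of convexity, and then take the limit via joint lower semicontinuity. First, since $S$ is compact so is $S\times S$, hence $\mathcal{P}(S\times S)$ is sequentially compact under weak convergence, and the deterministic sequence $\{E\mu_T\}$ admits a weakly convergent subsequence along which I write $E\mu_T\to\tilde{\mu}$.

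For the entropy bound I observe that both $\mu_T$ and $[\mu_T]_1\otimes\alpha$ are $r_T^{-1}$ averages indexed by $i$ of the product measures $\delta_{\bar{X}_{i-1}}\otimes\bar{\alpha}_{i-1}$ and $\delta_{\bar{X}_{i-1}}\otimes\alpha(\bar{X}_{i-1},\cdot)$, respectively. Applying joint convexity of $R$ pathwise and then Theorem \ref{chain_rule} (each pair has identical first marginal $\delta_{\bar{X}_{i-1}}$, so only the conditional term survives) yields
$$R(\mu_T\,\|\,[\mu_T]_1\otimes\alpha)\leq\frac{1}{r_T}\sum_{i=1}^{r_T}R(\bar{\alpha}_{i-1}\,\|\,\alpha(\bar{X}_{i-1},\cdot)),$$
which rearranges to $RE_T^1\geq (r_T/T)\,R(\mu_T\,\|\,[\mu_T]_1\otimes\alpha)$. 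Taking expectation and invoking joint convexity once more in the randomness of $\mu_T$ gives $E[R(\mu_T\,\|\,[\mu_T]_1\otimes\alpha)]\geq R(E\mu_T\,\|\,[E\mu_T]_1\otimes\alpha)$, where the identity $E([\mu_T]_1\otimes\alpha)=[E\mu_T]_1\otimes\alpha$ relies on $\alpha$ being deterministic. The Feller property (Condition \ref{cond1}) of $\alpha$ makes $\nu\mapsto\nu\otimes\alpha$ weakly continuous on $\mathcal{P}(S)$; combined with continuity of marginal projection this gives $[E\mu_T]_1\otimes\alpha\to[\tilde{\mu}]_1\otimes\alpha$ weakly. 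Joint lower semicontinuity of $R$ and $r_T/T\to A$ then deliver the desired lower bound $\liminf E[RE_T^1]\geq A\cdot R(\tilde{\mu}\,\|\,[\tilde{\mu}]_1\otimes\alpha)$.

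For the marginal identity when $A>0$, since $\bar{X}_i$ is drawn from $\bar{\alpha}_{i-1}$, the tower property gives $E\bar{\alpha}_{i-1}=E\delta_{\bar{X}_i}$, so a telescoping cancellation produces
$$[E\mu_T]_2-[E\mu_T]_1=\frac{1}{r_T}\bigl(E\delta_{\bar{X}_{r_T}}-E\delta_{\bar{X}_0}\bigr),$$
a signed measure of total variation at most $2/r_T$. Since $A>0$ forces $r_T\to\infty$, this vanishes, and continuity of marginal projection yields $[\tilde{\mu}]_1=[\tilde{\mu}]_2$. The main obstacle is the double application of joint convexity of $R$ --- once pathwise to reduce the sum of conditional entropies to $R(\mu_T\,\|\,[\mu_T]_1\otimes\alpha)$, and once in expectation to bring the randomness outside the entropy --- after which lower semicontinuity and Feller-property propagation close the lower bound, and the marginal identity is a routine boundary telescoping that exploits $r_T\to\infty$.
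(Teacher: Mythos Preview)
Your proof is correct. The entropy lower bound portion---two applications of joint convexity of $R$ (first over the index $i$, then over the randomness), the chain rule, Feller continuity to propagate $[E\mu_T]_1\otimes\alpha\to[\tilde\mu]_1\otimes\alpha$, and lower semicontinuity of $R$---is essentially identical to the paper's argument.

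For the marginal identity $[\tilde\mu]_1=[\tilde\mu]_2$ you take a genuinely different route. The paper argues at the level of the \emph{random} measures: it shows that $\int f\,d[\mu_T]_1-\int f\,d[\mu_T]_2$ is a normalized sum of martingale differences, bounds its second moment by $4\|f\|_\infty^2/r_T$, and uses Chebyshev to get $[\mu_T]_1-[\mu_T]_2\to 0$ in probability. You instead work directly with the deterministic sequence $[E\mu_T]_1-[E\mu_T]_2$, invoke the tower property $E\bar\alpha_{i-1}=E\delta_{\bar X_i}$, and telescope to a boundary term of total variation at most $2/r_T$. Since the lemma defines $\tilde\mu$ as the weak limit of $E\mu_T$, your argument suffices and is more elementary; the paper's martingale bound establishes the stronger fact that the random marginals themselves are asymptotically equal, which is not actually needed here. (A minor indexing point: the paper's stated convention has $\bar\alpha_i$ as the conditional law of $\bar X_i$, so the tower property gives $E\bar\alpha_{i-1}=E\delta_{\bar X_{i-1}}$ and the two expected marginals in fact coincide for every $T$---your telescoping correction is then vacuous, but the conclusion is the same.)
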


\begin{pf}
By the chain rule (Theorem~\ref{chain_rule}) and the joint convexity of
relative entropy,
\begin{eqnarray*}
E \bigl[ RE_{T}^{1} \bigr] & =&E \Biggl[ \frac{1}{T}
\sum_{i=1}^{r_{T}%
}R ( \bar{
\alpha}_{i-1}\llVert \alpha ) \Biggr]
\\
& =&E \Biggl[ \frac{1}{T}\sum_{i=1}^{r_{T}}R
( \delta_{\bar{X}_{i-1}%
}\otimes\bar{\alpha}_{i-1}\llVert
\delta_{\bar{X}_{i-1}}\otimes \alpha ) \Biggr]
\\
& \geq& E \biggl[ \frac{r_{T}}{T}R \bigl( \mu_{T}\llVert [ \mu
_{T} ] _{1}\otimes\alpha \bigr) \biggr]
\\
& \geq&\frac{r_{T}}{T}R \bigl( E\mu_{T}\llVert [ E\mu_{T}
] _{1}\otimes\alpha \bigr).
\end{eqnarray*}
Since $S\times S$ is compact, for any subsequence of $T$ there exists a
further subsequence along which $E\mu_{T}$ converges weakly to a probability
measure $\tilde{\mu}$. Under the Feller property of $\alpha$ (Condition
\ref{cond1}), $ [ E\mu_{T} ] _{1}\otimes\alpha$ converges
weakly to
$ [ \tilde{\mu} ] _{1}\otimes\alpha$. The lower semicontinuity of
relative entropy then implies
\[
\liminf_{T\rightarrow\infty}E \bigl[ RE_{T}^{1} \bigr]
\geq\liminf_{T\rightarrow\infty}\frac{r_{T}}{T}R \bigl( E\mu_{T}
\llVert [ E\mu_{T} ] _{1}\otimes\alpha \bigr) \geq AR
\bigl( \tilde {\mu }\llVert [ \tilde{\mu} ] _{1}\otimes\alpha
\bigr).
\]

This finishes the first part of Lemma~\ref{re1}. For the second part, we
employ a standard martingale argument. Let $\mathcal{F}_{i}$ be the
$\sigma
$-algebra generated by the random variables $ \{  ( \bar{X}%
_{0},\ldots,\bar{X}_{i} ), ( \bar{\tau}_{1},\ldots,\bar{\tau}%
_{i} )  \} $. Thus, $\mathcal{F}_{i}$ is a sequence of increasing
$\sigma$-algebra's and, since $\bar{\alpha}_{i}$ selects the conditional
distribution of $\bar{X}_{i}$, for any bounded continuous function $f$
on $S$
\[
E \biggl[  \biggl( f ( \bar{X}_{i} ) -\int
_{S}f ( y ) \bar{\alpha}_{i} ( dy ) \biggr)
\Big\rrvert \mathcal{F}_{i-1} \biggr] =0.
\]
Hence, for integers $0\leq i<k\leq r_{T}-1$
\[
E \biggl[ \biggl( f ( \bar{X}_{i} ) -\int_{S}f (
y ) \bar{\alpha}_{i} ( dy ) \biggr) \biggl( f ( \bar{X}%
_{k} ) -\int_{S}f ( y ) \bar{
\alpha}_{k} ( dy ) \biggr) \biggr] =0,
\]
and thus for any bounded continuous function $f$ on $S$
\begin{eqnarray*}
&& E \biggl[ \int_{S\times S}f ( x ) \mu_{T} ( dx,dy ) -
\int_{S\times S}f ( y ) \mu_{T} ( dx,dy ) \biggr]
^{2}
\\
&&\qquad =E \Biggl[ \frac{1}{r_{T}}\sum_{i=1}^{r_{T}}f
( \bar{X}_{i-1} ) -\frac{1}{r_{T}}\sum_{i=1}^{r_{T}}
\int_{S}f ( y ) \bar{\alpha }_{i-1} ( dy ) \Biggr]
^{2}
\\
&&\qquad \leq\frac{1}{r_{T}^{2}}\sum_{i=1}^{r_{T}}E
\biggl[ f ( \bar{X}%
_{i-1} ) -\int_{S}f (
y ) \bar{\alpha}_{i-1} ( dy ) \biggr] ^{2}
\\
&&\qquad \leq\frac{4}{r_{T}}\llVert f\rrVert _{\infty}^{2}.
\end{eqnarray*}
Since $0<A=\lim_{T\rightarrow\infty}r_{T}/T$, we have $r_{T}/T\geq A/2$ for
all $T$ large enough. Using Chebyshev's inequality and the last
display, we
conclude that $ [ \mu_{T} ] _{1}- [ \mu_{T} ] _{2}$
converges to $0$ in probability as $T\rightarrow\infty$ and, therefore,
$ [ \tilde{\mu} ] _{1}= [ \tilde{\mu} ] _{2}$ with
probability~$1$. This concludes the second part of Lemma~\ref{re1}.
\end{pf}

\subsubsection{The term $RE_{T}^{2}$}
\label{term2}

We now turn to the second cost $RE_{T}^{2}$. This cost comes from distorting
the exponential sojourn times. We introduce a function $\ell$ which is closely
related to the relative entropy of exponential distributions: $\ell (
x ) \doteq x\log x-x+1$ for any $x\geq0$.

\begin{lemma}
\label{re2}Given any sequence of controls $ \{ \bar{\alpha}_{i}%
,\bar{\sigma}_{i} \} $, fix a subsequence of $T$ for which the
conclusions in Lemma~\ref{re1} holds. Then we can extract a further
subsequence along which
\[
\liminf_{T\rightarrow\infty}E \bigl[ RE_{T}^{2} \bigr]
\geq\int_{S\times%
\mathbb{R}
_{+}}\ell ( u ) \tilde{\xi} ( dx,du ).
\]
Here, $\tilde{\xi}$ is a finite measure on $S\times%
\mathbb{R}
_{+}$ and is related to $\tilde{\mu}$ in Lemma~\ref{re1} by
%
\begin{equation}
\int_{%
\mathbb{R}
_{+}}u\tilde{\xi} ( dx,du ) =A [ \tilde{\mu} ]
_{1} ( dx ). \label{const2}%
\end{equation}
\end{lemma}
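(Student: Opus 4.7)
The strategy is to couple states and sojourns through an auxiliary random measure on $S\times\mathbb{R}_+$, pass to a weak subsequential limit, and verify both the entropy bound and the marginal identity.

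Writing $m_i := \int u\,\bar\sigma_i(du)$ for the (random) conditional mean of $\bar\tau_i$, I would define
\[
\xi_T(dx,du) := \frac{1}{T}\sum_{i=1}^{r_T} m_i\,\delta_{\bar X_{i-1}}(dx)\otimes\delta_{1/m_i}(du).
\]
Two algebraic identities motivate this choice. First,
\[
\int_{\mathbb{R}_+} u\,\xi_T(dx,du) \;=\; \frac{1}{T}\sum_{i=1}^{r_T}\delta_{\bar X_{i-1}}(dx) \;=\; \frac{r_T}{T}\,[\mu_T]_1(dx),
\]
which by Lemma \ref{re1} converges to $A[\tilde\mu]_1$; the constraint (\ref{const2}) will come from here. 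Second, a direct check gives $m\,\ell(1/m) = m - 1 - \log m =: \phi(m)$, so $\int \ell(u)\,\xi_T(dx,du) = \tfrac{1}{T}\sum_{i=1}^{r_T}\phi(m_i)$, which is precisely the natural jumpwise lower bound on $\tfrac{1}{T}\sum R(\bar\sigma_i\|\sigma)$.

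The pointwise inequality $R(\nu\|\sigma) \geq \phi(m)$ for any probability measure $\nu$ on $\mathbb{R}_+$ with mean $m$ follows from the decomposition $R(\nu\|\sigma) = R(\nu\|\mathrm{Exp}(1/m)) + \phi(m)$, obtained by computing $\log(d\mathrm{Exp}(1/m)/d\sigma)(u) = -\log m + u(1-1/m)$ and integrating against $\nu$. Applied with $\nu = \bar\sigma_i$ and summed, this gives $E[RE_T^2] \geq E\!\left[\int \ell\,d\xi_T\right]$. Next, for tightness of $\{E\xi_T\}$: $S$ is compact; total mass is $\tfrac{1}{T}E\sum m_i\leq K_2 + o(1)$ via $\sum_{i=1}^{r_T-1}\bar\tau_i \leq K_2\sum_{i=1}^{r_T-1}\bar s_i \leq K_2 T$; and each atom at $u=1/m_i > M$ has weight $m_i < 1/M$, so $\xi_T(S\times(M,\infty)) \leq C/M$. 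Extract a further subsequence with $E\xi_T \to \tilde\xi$ weakly. Lower semicontinuity of $\nu\mapsto \int\ell\,d\nu$ (valid since $\ell\geq 0$ is continuous on $[0,\infty)$ with $\ell(0)=1$ and $\ell(u)\to\infty$ as $u\to\infty$) yields $\liminf_{T\to\infty} E[RE_T^2]\geq \int\ell(u)\,\tilde\xi(dx,du)$.

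To obtain (\ref{const2}) I need uniform integrability of $u$ under $E\xi_T$. Since $\phi(m) \geq -1 - \log m \geq \log M - 1$ whenever $m\leq 1/M$,
\[
E\!\left[\int_{\{u>M\}} u\,\xi_T\right] \;=\; \frac{E\,|\{i\leq r_T:\ m_i<1/M\}|}{T} \;\leq\; \frac{E[RE_T^2]}{\log M - 1} \;\longrightarrow\; 0
\]
as $M\to\infty$, uniformly in $T$ by (\ref{uni_finite}). Combined with weak convergence of $\int(u\wedge M)\,E\xi_T$ for each $M$ and Lemma \ref{re1}, this yields $\int u\,\tilde\xi(dx,du) = A[\tilde\mu]_1(dx)$.

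The principal obstacle is designing a single auxiliary measure from which both the entropy lower bound and the marginal constraint flow; the key algebraic feature is $m\,\ell(1/m) = \phi(m)$, which links the entropy-minimizing exponential of mean $m$ to the cost function $\ell$ evaluated at the reciprocal rate $1/m$. A secondary technical point is the uniform integrability in the $u$-direction, which is handled by the same entropy bound via the logarithmic lower tail of $\phi$.
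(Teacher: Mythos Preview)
Your approach is the paper's: couple states and sojourn means through $\xi_T$ with atoms at $(\bar X_{i-1},1/m_i)$ weighted by $m_i/T$, invoke the identity $m\,\ell(1/m)=g(m)$ together with Lemma~\ref{exp_re} for the entropy lower bound, pass to a subsequential weak limit, and recover (\ref{const2}) by uniform integrability in $u$. Your tightness argument in the $u$-direction and your uniform-integrability estimate via $\phi(m)\ge\log M-1$ for $m\le 1/M$ are both correct and slightly more explicit than the paper's appeal to superlinearity of $\ell$.

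There is one genuine gap. Your total-mass bound asserts $\tfrac{1}{T}E\sum_{i=1}^{r_T} m_i \le K_2 + o(1)$, but the inequality $\sum_{i=1}^{r_T-1}\bar\tau_i \le K_2 T$ controls only the first $r_T-1$ summands; the last term $\tfrac{1}{T}E[m_{r_T}]$ is not $o(1)$ in general (if $r_T=1$, the constraint $\bar\tau_1 > q(\bar X_0)T$ a.s.\ forces $m_1\ge K_1 T$). You can patch this directly from the entropy bound you already use: since $g\ge 0$, $g(m_{r_T}) \le T\cdot RE_T^2$, and $g(m)\ge m/2$ for large $m$, so $m_{r_T}/T$ is uniformly bounded. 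The paper bounds total mass differently, via $x\le\max\{50,10g(x)/9\}$ and $r_T/T\le C$, but only after replacing $m_{r_T}$ by the median of $(\bar\Delta_T,\int u\,\bar\sigma_{r_T}(du),1)$. That median redefinition is immaterial for Lemma~\ref{re2} itself, but it is exactly what makes the constraint $[\tilde\xi]_1(dx)=q(x)\tilde\eta(dx)$ of Lemma~\ref{eeta} go through, since with the median one has $|\bar m_{r_T}-\bar\Delta_T|\le 1$, whereas with your raw $m_{r_T}$ the quantity $|m_{r_T}-\bar\Delta_T|/T$ need not vanish. So once the mass gap is fixed your $\tilde\xi$ proves the present lemma, but it would not carry forward to the next one.
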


Before proving this lemma, we define $g$: $%
\mathbb{R}
_{+}\rightarrow%
\mathbb{R}
$ by $g ( b ) \doteq-\log b+b-1$. The functions $g$ and $\ell$ are
related by
\[
g ( x ) =x\ell ( 1/x ),
\]
and $g$ has the following property.

\begin{lemma}
\label{exp_re}Let $\sigma$ be an exponential distribution with mean
$1$. Then
%
\begin{equation}
\inf \biggl\{ R ( \gamma\llVert \sigma ) \dvtx\int
_{%
\mathbb{R}
_{+}}u\gamma ( du ) =b \biggr\} =g ( b ).
\label{exp_re_eq}%
\end{equation}
\end{lemma}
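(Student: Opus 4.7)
The plan is to prove the identity by exhibiting an explicit minimizer and using a short decomposition of relative entropy to match it with a lower bound.

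First, I would guess the minimizer by recognizing the problem as a maximum-entropy problem with a prescribed mean. The natural candidate is the exponential tilting of $\sigma$ with mean $b$, namely $\gamma_b$, the exponential distribution with mean $b$, which has density $(1/b)e^{-u/b}$ on $\mathbb{R}_+$. Then $d\gamma_b/d\sigma(u)=(1/b)e^{u(1-1/b)}$, so $\log(d\gamma_b/d\sigma)(u)=-\log b + (1-1/b)u$, and a direct computation using $\int u\,\gamma_b(du)=b$ gives
$$R(\gamma_b\|\sigma) = -\log b + (1-1/b)b = -\log b + b - 1 = g(b).$$
Since $\gamma_b$ satisfies the constraint, this already yields $\inf\le g(b)$.

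For the lower bound, fix any $\gamma$ with $\int u\,d\gamma=b$, and assume $R(\gamma\|\sigma)<\infty$ (otherwise the inequality is trivial). Since $\sigma$ and $\gamma_b$ are mutually absolutely continuous on $\mathbb{R}_+$, $\gamma\ll\sigma$ implies $\gamma\ll\gamma_b$, and the chain rule for Radon-Nikodym derivatives gives the identity
$$R(\gamma\|\sigma) = R(\gamma\|\gamma_b) + \int \log\frac{d\gamma_b}{d\sigma}\,d\gamma.$$
The splitting of the integral is legitimate because $\log(d\gamma_b/d\sigma)(u)=-\log b+(1-1/b)u$ is affine in $u$ and therefore integrable under $\gamma$ by the mean constraint.

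The first term on the right is nonnegative, and evaluating the second term using $\int u\,d\gamma=b$ produces exactly $-\log b+(1-1/b)b = g(b)$. Thus $R(\gamma\|\sigma)\ge g(b)$ for every admissible $\gamma$, with equality at $\gamma=\gamma_b$, proving \eqref{exp_re_eq}. The step I would be most careful about is handling the case when $R(\gamma\|\sigma)=\infty$ or when $\gamma$ fails to be absolutely continuous with respect to $\sigma$, but this case is trivial; the substance of the proof rests on the happy fact that $\log(d\gamma_b/d\sigma)$ is affine in $u$, so a single moment constraint pins down its $\gamma$-integral and the lower bound drops out without any duality or Lagrange-multiplier argument.
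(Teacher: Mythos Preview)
Your proof is correct and follows essentially the same approach as the paper: both introduce the exponential distribution with mean $b$, decompose $R(\gamma\|\sigma)=R(\gamma\|\gamma_b)+\int\log(d\gamma_b/d\sigma)\,d\gamma$, and use nonnegativity of the first term together with the affine form of $\log(d\gamma_b/d\sigma)$ to obtain the lower bound $g(b)$, with equality at $\gamma=\gamma_b$. Your explicit remark about integrability of the affine integrand under the mean constraint is a small clarification the paper leaves implicit.
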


\begin{pf}
Let $\sigma_{b}$ be the exponential distribution with mean $b$, that is,
\[
\sigma_{b} ( du ) =\frac{1}{b}e^{-{u}/{b}}\,du.
\]
Then $\frac{d\sigma_{b}}{d\sigma} ( u ) =\frac{1}{b}e^{ (
1-{1}/{b} ) u}$ for $u>0$. Picking any $\gamma$ such that
$R (
\gamma\llVert \sigma  ) <\infty$ and $\int_{%
\mathbb{R}
_{+}}u\gamma ( du ) =b$,
\begin{eqnarray*}
R ( \gamma\llVert \sigma ) & =&\int_{%
\mathbb{R}
_{+}}
\log \biggl( \frac{d\gamma}{d\sigma} \biggr) \gamma ( du )
\\
& =&\int_{%
\mathbb{R}
_{+}}\log \biggl( \frac{d\gamma}{d\sigma_{b}} \biggr) \gamma
( du ) +\int_{%
\mathbb{R}
_{+}}\log \biggl( \frac{d\sigma_{b}}{d\sigma} \biggr)
\gamma ( du )
\\
& =&R ( \gamma\llVert \sigma_{b} ) +\int
_{%
\mathbb{R}
_{+}} \biggl[ -\log b+ \biggl( 1-\frac{1}{b} \biggr) u
\biggr] \gamma ( du )
\\
& =&R ( \gamma\llVert \sigma_{b} ) +g ( b )
\\
& \geq& g ( b )
\end{eqnarray*}
and the infimum in (\ref{exp_re_eq}) is achieved when $R ( \gamma
\llVert \sigma_{b}  ) =0$, that is, $\gamma=\sigma_{b}$.
\end{pf}

\begin{pf*}{Proof of Lemma~\ref{re2}}
Lemma~\ref{exp_re} guarantees that
%
\begin{equation}
RE_{T}^{2}\geq\frac{1}{T}\sum
_{i=1}^{r_{T}}g \biggl( \int u\bar{\sigma}%
_{i} ( du ) \biggr). \label{re2_g}%
\end{equation}
Recall the definition of $\mathcal{F}_{i}$ as the $\sigma$-algebra generated
by the controlled process up to time $i$. Since $\bar{\sigma}_{i}$
selects the
conditional distribution of $\bar{\tau}_{i}$,
\[
E [ \bar{\tau}_{i}|\mathcal{F}_{i-1} ] =\int u\bar{
\sigma}%
_{i} ( du ).
\]
Define $\bar{m}_{i}\doteq\int u\bar{\sigma}_{i} ( du ) $, for
$i=1,\ldots,r_{T}-1$. The definition of $\bar{m}_{r_{T}}$ requires more work.
Recalling the definition of $\bar{R}_{T}$ by the equation analogous to
(\ref{R_T}) and the restriction that $\bar{R}_{T}=r_{T}$,
\[
T-\sum_{i=1}^{r_{T}-1}\frac{\bar{\tau}_{i}}{q ( \bar{X}_{i-1} )
}\leq
\frac{\bar{\tau}_{r_{T}}}{q ( \bar{X}_{r_{T}-1} ) }.
\]
Multiplying both sides by $q ( \bar{X}_{r_{T}-1} ) $ and taking
expectation conditioned on $\mathcal{F}_{r_{T}-1}$,
\[
q ( \bar{X}_{r_{T}-1} ) \Biggl( T-\sum_{i=1}^{r_{T}-1}
\frac
{\bar{\tau}_{i}}{q ( \bar{X}_{i-1} ) } \Biggr) \leq E [ \bar{\tau}_{r_{T}}|
\mathcal{F}_{r_{T}-1} ] =\int u\bar{\sigma }_{r_{T}%
} ( du ).
\]
Define
%
\begin{equation}
\bar{\Delta}_{T}\doteq q ( \bar{X}_{r_{T}-1} ) \Biggl( T-\sum
_{i=1}^{r_{T}-1}\frac{\bar{\tau}_{i}}{q ( \bar{X}_{i-1} )
} \Biggr) ,
\label{deltat}%
\end{equation}
and define $\bar{m}_{r_{T}}$ by
%
\begin{equation}
\bar{m}_{r_{T}}\doteq\cases{
\displaystyle\int u\bar{\sigma}_{r_{T}} ( du ), &\quad $\mbox{if }\displaystyle\bar{
\Delta}_{T}%
\leq\int u\bar{\sigma}_{r_{T}} ( du ) <1,$
\vspace*{2pt}\cr
1, & \quad$\mbox{if }\displaystyle\bar{\Delta}_{T}\leq1\leq\int u\bar{
\sigma}_{r_{T}} ( du ),$
\vspace*{2pt}\cr
\bar{\Delta}_{T}, &\quad $\mbox{if }\displaystyle 1<\bar{\Delta}_{T}\leq\int u
\bar{\sigma }_{r_{T}%
} ( du ),$}
\label{mrt}%
\end{equation}
that is, $\bar{m}_{r_{T}}$ is the median of the triplet $ ( \bar
{\Delta}%
_{T},\int u\bar{\sigma}_{r_{T}} ( du ),1 ) $. Since $g$ is
increasing on $ ( 1,\infty ) $, we have $g ( \int u\bar
{\sigma}_{r_{T}} ( du )  ) \geq g ( \bar{m}_{r_{T}%
} ) $ in all three cases. Thus, by (\ref{re2_g}),
%
\begin{equation}
RE_{T}^{2}\geq\frac{1}{T}\sum
_{i=1}^{r_{T}}g \biggl( \int u\bar{\sigma}%
_{i} ( du ) \biggr) \geq\frac{1}{T}\sum
_{i=1}^{r_{T}}g ( \bar{m}_{i} ).
\label{gbd}%
\end{equation}

Next, consider the measure on $S\times%
\mathbb{R}
_{+}$ defined by
%
\begin{equation}
\xi_{T} ( dx,du ) \doteq\frac{1}{T}\sum
_{i=1}^{r_{T}}\delta _{\bar{X}_{i-1}} ( dx )
\delta_{ ( \bar{m}_{i} ) ^{-1}
} ( du ) \bar{m}_{i}. \label{xit}%
\end{equation}
The total mass of $E\xi_{T}$ is
\[
E\xi_{T} ( S\times%
\mathbb{R}
_{+} ) =\frac{1}{T}\sum
_{i=1}^{r_{T}}E [ \bar{m}_{i} ].
\]
According to (\ref{uni_finite}) and the assumption that $F\geq0$, we have
%
\begin{equation}
\sup_{T}E \bigl[ RE_{T}^{2} \bigr] <
\infty. \label{re_bd}%
\end{equation}
By (\ref{gbd}) $\sup_{T}E [ \sum_{i=1}^{r_{T}}g ( \bar{m}%
_{i} ) /T ] <\infty$. We also have by a straightforward
calculation that $x\leq\max \{ 50,10g ( x ) /9 \} $.
Using this and the fact that $r_{T}/T\leq C$, we have $\sup_{T}E [
\sum_{i=1}^{r_{T}}\bar{m}_{i}/T ] <\infty$, that is, the total
mass of
$E\xi_{T}$ has a bound uniform in $T$. Thus, when viewed as a sequence of
measures on the compact space $S\times [ 0,\infty ] $, $E\xi_{T}$
is tight due to the uniform boundedness of the total mass. We denote
the weak
limit by $\tilde{\xi}$, which is a finite measure. Since the function
$\ell$
is nonnegative and continuous,
%
\begin{eqnarray}\label{ll}
\liminf_{T\rightarrow\infty}E \bigl[ RE_{T}^{2} \bigr] &
\geq &\liminf_{T\rightarrow\infty}E \Biggl[ \frac{1}{T}\sum
_{i=1}^{r_{T}}g ( \bar{m}_{i} ) \Biggr]
\nonumber
\\
& =&\liminf_{T\rightarrow\infty}E \biggl[ \int_{S\times%
\mathbb{R}
_{+}}\ell (
u ) \xi_{T} ( dx,du ) \biggr]
\\
& =&\liminf_{T\rightarrow\infty}\int_{S\times%
\mathbb{R}
_{+}}\ell ( u ) E
\xi_{T} ( dx,du )
\nonumber\\
& \geq&\int_{S\times%
\mathbb{R}
_{+}}\ell ( u ) \tilde{\xi} ( dx,du ).
\nonumber
\end{eqnarray}

We next explore the relation between $\tilde{\xi}$ and $\tilde{\mu}$.
In order
to establish (\ref{const2}), it suffices to show that for any bounded
continuous function $f$ on $S$
\[
\int_{S\times%
\mathbb{R}
_{+}}uf ( x ) \tilde{\xi} ( dx,du ) =A\int
_{S}f ( x ) [ \tilde{\mu} ] _{1} ( dx ).
\]
By the definitions of $\xi_{T}$ and $\mu_{T,}$
%
\begin{equation}
\int_{S\times%
\mathbb{R}
_{+}}uf ( x ) E\xi_{T} ( dx,du ) =
\frac{r_{T}}{T}%
\int_{S}f ( x ) [ E
\mu_{T} ] _{1} ( dx ). \label{exi_emu}%
\end{equation}
Then (\ref{re_bd}) and (\ref{ll}) imply there is a uniform upper bound on
\[
\int_{%
\mathbb{R}
_{+}}\ell ( u ) \int_{S}f ( x ) E
\xi_{T} ( dx,du ).
\]
If we consider $\int_{S}f ( x ) E\xi_{T} ( dx,du ) $ as
a sequence of measures on $%
\mathbb{R}
_{+}$ with bounded total mass, then $\int_{S}f ( x ) E\xi
_{T} ( dx,du ) $ converges weakly to $\int_{S}f ( x )\*
\tilde{\xi} ( dx,  du ) $. Since $\ell$ is superlinear, \cite
{dupell4}, Theorem
A.3.19, implies that
\[
\lim_{T\rightarrow\infty}\int_{S\times%
\mathbb{R}
_{+}}uf ( x ) E
\xi_{T} ( dx,du ) =\int_{S\times%
\mathbb{R}
_{+}}uf ( x ) \tilde{\xi} (
dx,du ).
\]
Using
\[
\lim_{T\rightarrow\infty}\frac{r_{T}}{T}\int_{S}f ( x
) [ E\mu_{T} ] _{1} ( dx ) =A\int_{S}f (
x ) [ \tilde{\mu} ] _{1} ( dx )
\]
and (\ref{exi_emu}) we arrive at (\ref{const2}).
\end{pf*}

\subsubsection{The term \texorpdfstring{$E\bar{\eta}_{T}$}{$E bar{eta}_{T}$}}
\label{term3}

\begin{lemma}
\label{eeta} Given any sequence of controls $ \{ \bar{\alpha}%
_{i},\bar{\sigma}_{i} \} $, fix a subsequence of $T$ for which the
conclusions in Lemma~\ref{re2} hold. Then we can extract a further subsequence
along which
\[
\liminf_{T\rightarrow\infty}E \bigl[ F ( \bar{\eta}_{T} ) \bigr]
\geq F ( \tilde{\eta} )
\]
for some probability measure $\tilde{\eta}$ on $S$, which is related to
$\tilde{\xi}$ in Lemma~\ref{re2} by
%
\begin{equation}
q ( x ) \tilde{\eta} ( dx ) =[\tilde{\xi}]_{1} ( dx ).
\label{const3}%
\end{equation}
\end{lemma}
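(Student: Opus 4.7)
The plan is to identify a probability measure $\tilde\eta$ as the weak limit of $E\bar\eta_T$ along the given subsequence, verify the identity $q(x)\tilde\eta(dx)=[\tilde\xi]_1(dx)$, and then read off the bound on $\liminf E[F(\bar\eta_T)]$ from Jensen's inequality and the lower semicontinuity of $F$. Recall from the reduction preceding Section \ref{term1} that it suffices to treat $F$ that is nonnegative, convex and lower semicontinuous; for such an $F$, once $E\bar\eta_T\to\tilde\eta$ weakly is established, Jensen's inequality gives $E[F(\bar\eta_T)]\geq F(E\bar\eta_T)$ and lower semicontinuity yields $\liminf_{T\to\infty} E[F(\bar\eta_T)]\geq F(\tilde\eta)$. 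Since each $E\bar\eta_T$ lies in $\mathcal{P}(S)$ and $S$ is compact, tightness is automatic and any weak limit is itself a probability measure.

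The main step is to relate this limit to $\tilde\xi$. For any $f\in C_b(S)$ I would start from the expression (\ref{eta_bar}) for $\bar\eta_T$, apply the tower property $E[\bar\tau_i|\mathcal{F}_{i-1}]=\bar m_i$ (which holds for $i\leq r_T-1$), and compare with $\int (f/q)\,d[E\xi_T]_1$ read off from the definition (\ref{xit}) of $\xi_T$. The sums over $i=1,\dots,r_T-1$ cancel identically, and substituting the definition (\ref{deltat}) of $\bar\Delta_T$ into the boundary term of $\bar\eta_T$ yields
\begin{equation*}
\int_S f\,dE\bar\eta_T - \int_S \frac{f(x)}{q(x)}\,d[E\xi_T]_1(x) \;=\; \frac{1}{T}\,E\!\left[\frac{f(\bar X_{r_T-1})}{q(\bar X_{r_T-1})}\,(\bar\Delta_T-\bar m_{r_T})\right].
\end{equation*}

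The main technical obstacle, and the precise reason for the median truncation (\ref{mrt}) introduced in Lemma \ref{re2}, is to show that this single boundary term vanishes in the limit. Using the a.s.\ inequality $\bar\Delta_T\leq\int u\,\bar\sigma_{r_T}(du)$ derived in the proof of Lemma \ref{re2}, a direct case-by-case inspection of the three cases in (\ref{mrt}) shows $0\leq\bar m_{r_T}-\bar\Delta_T\leq 1$; combined with $q\geq K_1>0$ and $\|f\|_\infty<\infty$, the right-hand side above is $O(1/T)$. Since Lemma \ref{re2} gives $E\xi_T\to\tilde\xi$ weakly on the compact space $S\times[0,\infty]$, projection gives $[E\xi_T]_1\to[\tilde\xi]_1$ weakly on $S$, and as $f/q\in C_b(S)$ we conclude $\int f\,dE\bar\eta_T\to\int (f/q)\,d[\tilde\xi]_1$ for every $f\in C_b(S)$. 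This uniquely identifies the weak limit $\tilde\eta$ of $E\bar\eta_T$, establishes (\ref{const3}), and ensures $\tilde\eta\in\mathcal{P}(S)$. The Jensen/lower-semicontinuity step then completes the proof.
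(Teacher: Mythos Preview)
Your proof is correct and follows essentially the same route as the paper: both use Jensen's inequality together with convexity and lower semicontinuity of $F$ for the $\liminf$ bound, and both establish the identity (\ref{const3}) by comparing $q(x)E\bar\eta_T(dx)$ with $[E\xi_T]_1(dx)$, reducing the discrepancy to the single boundary term involving $\bar\Delta_T-\bar m_{r_T}$ and bounding it by $1$ via the median definition (\ref{mrt}). The only cosmetic difference is that you phrase the comparison in terms of integrals against test functions $f\in C_b(S)$, whereas the paper uses total variation directly; a pleasant byproduct of your formulation is that no further subsequence extraction is actually needed, since convergence of $\int f\,dE\bar\eta_T$ for every $f$ already identifies the limit $\tilde\eta$ along the subsequence inherited from Lemma~\ref{re2}.
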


\begin{pf}
As a sequence of probability measures on the compact space $S$, we can always
extract a subsequence of $T$ such that $E\bar{\eta}_{T}$ converges
weakly to a
probability measure on $S$ which we denote by $\tilde{\eta}$. The convexity
and lower semicontinuity of $F$ imply that
\[
\liminf_{T\rightarrow\infty}E \bigl[ F ( \bar{\eta}_{T} ) \bigr]
\geq\liminf_{T\rightarrow\infty}F ( E\bar{\eta}_{T} ) \geq F (
\tilde{\eta} ).
\]
By the definitions of $\bar{\eta}_{T}$ in (\ref{eta_bar}) and $\bar
{\Delta
}_{T}$ in (\ref{deltat}),
\begin{eqnarray*}
& &q ( x ) E\bar{\eta}_{T} ( dx )
\\
& &\qquad=\frac{q ( x ) }{T}E \Biggl[ \sum_{i=1}^{r_{T}-1}
\delta_{\bar
{X}_{i-1}} ( dx ) \frac{\bar{\tau}_{i}}{q ( \bar{X}%
_{i-1} ) }+\delta_{\bar{X}_{r_{T}-1}} ( dx )
\Biggl( T-\sum_{i=1}^{r_{T}-1}\frac{\bar{\tau}_{i}}{q ( \bar{X}_{i-1} )
}
\Biggr) \Biggr]
\\
&&\qquad =\frac{1}{T}E \Biggl[ \sum_{i=1}^{r_{T}-1}
\delta_{\bar{X}_{i-1}} ( dx ) \bar{\tau}_{i}+\delta_{\bar{X}_{r_{T}-1}} (
dx ) \bar{\Delta}_{T} \Biggr]
\\
&&\qquad =\frac{1}{T} \Biggl( \sum_{i=1}^{r_{T}-1}E
\bigl[ E \bigl[ \delta_{\bar
{X}_{i-1}} ( dx ) \bar{\tau}_{i}|
\mathcal{F}_{i-1} \bigr] \bigr] +E \bigl[ \delta_{\bar{X}_{r_{T}-1}} ( dx )
\bar{\Delta}%
_{T} \bigr] \Biggr)
\\
&&\qquad =\frac{1}{T} \Biggl( \sum_{i=1}^{r_{T}-1}E
\bigl[ \delta_{\bar{X}_{i-1}%
} ( dx ) \bar{m}_{i} \bigr] +E \bigl[
\delta_{\bar{X}_{r_{T}-1}%
} ( dx ) \bar{\Delta}_{T} \bigr] \Biggr).
\end{eqnarray*}
Recalling the definition of $\xi_{T}$ in (\ref{xit}), we have
\[
[ E\xi_{T} ] _{1} ( dx ) =\frac{1}{T}\sum
_{i=1}^{r_{T}%
}E \bigl[ \delta_{\bar{X}_{i-1}} ( dx )
\bar{m}_{i} \bigr].
\]
This implies the total variation bound
\[
\bigl\llVert q ( x ) E\bar{\eta}_{T} ( dx ) - [ E\xi_{T} ]
_{1} ( dx ) \bigr\rrVert _{\mathrm{TV}}\leq\frac
{1}{T}E\llvert
\bar{\Delta}_{T}-\bar{m}_{r_{T}}\rrvert .
\]
Recalling the definition of $\bar{m}_{r_{T}}$ in (\ref{mrt}), we
conclude that
\[
\bigl\llVert q ( x ) E\bar{\eta}_{T} ( dx ) - [ E\xi_{T} ]
_{1} ( dx ) \bigr\rrVert _{\mathrm{TV}}\leq\frac
{1}{T}.
\]
By taking limits, we arrive at (\ref{const3}).
\end{pf}

Lemmas \ref{re1}, \ref{re2} and~\ref{eeta} together imply
for a
sequence of controls $ \{ \bar{\alpha}_{i},\bar{\sigma}_{i} \} $
satisfying (\ref{rep_gen1}), along any subsequence of $T$ such that
$r_{T}/T\rightarrow A$, we can extract a further subsequence along which
%
\begin{eqnarray}\label{lower2}%
&&\liminf_{T\rightarrow\infty}E \bigl[ F ( \bar{\eta}_{T} )
+RE_{T}^{1}+RE_{T}^{2} \bigr]
\nonumber
\\[-8pt]
\\[-8pt]
\nonumber
&&\qquad\geq F (
\tilde{\eta} ) +AR \bigl( \tilde{\mu}\llVert [ \tilde{\mu} ] _{1}\otimes
\alpha \bigr) +\int_{S\times%
\mathbb{R}
_{+}}\ell ( u ) \tilde{\xi} (
dx,du ),
\end{eqnarray}
where $\tilde{\eta}$, $\tilde{\mu}$ and $\tilde{\xi}$ satisfy the constraints
(\ref{const2}), (\ref{const3}) and (\ref{const1}) if $A>0$.

Recall that our goal is to prove (\ref{lower1}). Hence, we need to establish
the relationship between the right-hand side of (\ref{lower2}) and the rate
function $I$ defined in Section~\ref{rate_defn}.

\subsubsection{Properties of the rate function $I$}

We prove the following lemma, for which we adopt the convention $0\cdot
\infty\doteq0$. This is in fact the key link, showing that the rate function
that is naturally obtained by the weak convergence analysis used to
prove the
upper bound in fact equals $I$ for suitable measures, and also
indicating how
to construct controls to prove the lower bound for this same collection of
measures. Note that the constraints appearing in the lemma hold for the
subsequence appearing in (\ref{lower2}) due to Lemmas \ref{re1}, \ref
{re2} and
\ref{eeta}.

\begin{lemma}
\label{DV rate}Let $I ( \eta ) $ be defined by (\ref{I1}). Suppose
that $\eta\ll\pi$, that $\mu$ and $\xi$ satisfy the constraints
%
\begin{equation}
q ( x ) \eta ( dx ) =[\xi]_{1} ( dx ) \quad\mbox{and}\quad\int
_{%
\mathbb{R}
_{+}}u\xi ( dx,du ) =A [ \mu ] _{1} ( dx ),
\label{constr4}%
\end{equation}
and that when $A>0$ the constraint $ [ \mu ] _{1}= [
\mu ] _{2}$ is also true. Then
%
\begin{equation}
I ( \eta ) \leq AR \bigl( \mu\llVert [ \mu ] _{1}\otimes\alpha
\bigr) +\int_{S\times%
\mathbb{R}
_{+}}\ell ( u ) \xi ( dx,du ).
\label{eta_R_l}%
\end{equation}
Moreover,
\[
I ( \eta ) =\inf \biggl[ AR \bigl( \mu\llVert [ \mu ] _{1}\otimes
\alpha \bigr) +\int_{S\times%
\mathbb{R}
_{+}}\ell ( u ) \xi ( dx,du )
\biggr],
\]
where the infimum is over all possible choices of $A\geq0$, $\mu$ and
$\xi$
satisfying these constraints.
\end{lemma}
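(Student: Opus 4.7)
The plan is to establish the inequality (\ref{eta_R_l}) by combining Lemma \ref{var_for} with the Legendre dual of $\ell$, and then to verify the identity by exhibiting an explicit minimizer drawn from the reversible structure. Write $\theta \doteq d\eta/d\pi$, $V(x) \doteq \int_S \theta^{1/2}(y)\,\alpha(x, dy)$, and $J \doteq \int_S q(x) \theta^{1/2}(x) V(x)\,\pi(dx)$, so that $I(\eta) = \int q\,d\eta - J$; Cauchy--Schwarz combined with detailed balance (\ref{reversible2}) gives $J \leq \int q\,d\eta$, so $J$ is finite.

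I first dispose of $A = 0$: the constraint $\int u\,\xi(dx,du) = 0$ forces $\xi$ to concentrate on $\{u = 0\}$, so $\int \ell(u)\,\xi = \ell(0)[\xi]_1(S) = \int q\,d\eta \geq I(\eta)$. For $A > 0$, I disintegrate $\xi(dx,du) = q(x)\eta(dx)\,\tilde{\nu}(du|x)$ and set $\bar{v}(x) \doteq \int u\,\tilde{\nu}(du|x)$; convexity of $\ell$ gives $\int \ell(u)\,\xi \geq \int q\eta\,\ell(\bar{v})$, while the second constraint becomes $A[\mu]_1(dx) = q(x)\bar{v}(x)\eta(dx)$.

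Next I apply two variational lower bounds driven by an auxiliary positive function $u$ on $S$. From Lemma \ref{var_for} applied to $(\beta(\cdot|x),\alpha(x,\cdot))$ with test function $y\mapsto \log u(y)-\log u(x)$, integrated against $[\mu]_1$ and using $[\mu]_1=[\mu]_2$, I obtain
\[
AR(\mu\,\|\,[\mu]_1\otimes\alpha)\geq A\int_S\log\frac{u(x)}{W(x)}\,[\mu]_1(dx),
\]
where $W(x)\doteq\int u(y)\,\alpha(x,dy)$. From the Legendre inequality $\ell(v)\geq vp-(e^p-1)$ taken at $p(x)=\log(W(x)/u(x))$, together with $A[\mu]_1=q\bar v\eta$,
\[
\int q\eta\,\ell(\bar v)\geq -A\int_S\log\frac{u(x)}{W(x)}\,[\mu]_1(dx)+\int q\,d\eta-\int_S q(x)\frac{W(x)}{u(x)}\,\eta(dx).
\]
Adding, the entropy terms cancel and the remainder is $\int q\,d\eta-\int qW\theta/u\,d\pi$. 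Choosing $u=\theta^{1/2}$ (first regularized to $u_\epsilon=\theta^{1/2}+\epsilon$, then letting $\epsilon\downarrow 0$ by dominated convergence with the envelope $q(V+1)\theta^{1/2}$, which is $\pi$-integrable by Cauchy--Schwarz and detailed balance) produces $W=V$ and $\int qW\theta/u\,d\pi=J$, giving (\ref{eta_R_l}).

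For the identity I exhibit the explicit optimizer
\[
A=J,\quad \mu(dx,dy)=\frac{q(x)\theta^{1/2}(x)\theta^{1/2}(y)\alpha(x,dy)\pi(dx)}{J},\quad \xi(dx,du)=q(x)\eta(dx)\,\delta_{V(x)/\theta^{1/2}(x)}(du).
\]
Detailed balance yields $[\mu]_1=[\mu]_2=q\theta^{1/2}V\pi/J$ and the other constraints follow by direct computation. With these choices the conditional $\beta(dy|x)=\theta^{1/2}(y)\alpha(x,dy)/V(x)$ saturates the Donsker--Varadhan bound for $u=\theta^{1/2}$, and $\bar v=V/\theta^{1/2}$ saturates the Legendre bound at $p=\log(V/\theta^{1/2})$, so the same algebra gives $AR+\int\ell\,d\xi=\int q\,d\eta-J=I(\eta)$. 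The main obstacle is conceptual: discovering the pair $(u,p)=(\theta^{1/2},\log(V/\theta^{1/2}))$ that simultaneously saturates both relaxations so that the reversible quadratic $J$ emerges exactly; integrability issues when $\theta$ vanishes are then controlled by the regularization above and the uniform domination of all relevant integrals by $\int q\,d\eta<\infty$.
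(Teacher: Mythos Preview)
Your argument is correct and takes a genuinely different route to the inequality than the paper does. The paper applies Lemma~\ref{var_for} on the product space $S\times S$ with the unbounded test function $\tfrac12[\log\theta(x)+\log\theta(y)]$, then splits $R(\mu\Vert\widetilde\pi\otimes\alpha)$ via the chain rule into $R([\mu]_1\Vert\widetilde\pi)+R(\mu\Vert[\mu]_1\otimes\alpha)$, and finally closes with the elementary inequality $-e^{-r}\le ar+a\log a-a$. To justify the unbounded test function the paper must show $R([\mu]_1\Vert\widetilde\pi)<\infty$, which is done in the appendix and is the only place Condition~\ref{cond2'} is used. Your approach instead couples the Donsker--Varadhan bound for the chain with the Legendre dual of $\ell$, tied together by a single auxiliary function $u$; after cancellation only $\int q\,d\eta-\int qW\theta/u\,d\pi$ survives, and the choice $u=\theta^{1/2}$ recovers $I(\eta)$. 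This bypasses the appendix argument entirely and hence does not use Condition~\ref{cond2'} for this lemma, which is a real simplification. For the identity, your explicit optimizer coincides with the paper's.

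Two small points to tidy up. First, your regularization $u_\epsilon=\theta^{1/2}+\epsilon$ bounds $u$ below but not above; since Lemma~\ref{var_for} as stated needs a bounded test function, also truncate via $u_{\epsilon,M}=(\theta^{1/2}\wedge M)+\epsilon$ so that $\log(u_{\epsilon,M}/W_{\epsilon,M})$ is bounded and the intermediate term $A\int\log(u/W)\,d[\mu]_1$ is finite, making the cancellation legitimate; then send $M\to\infty$ before $\epsilon\downarrow 0$ using your envelope. Second, in the identity part your optimizer presumes $J>0$; when $J=0$ take $A=0$ and $\xi(dx,du)=q(x)\eta(dx)\delta_0(du)$, which satisfies the constraints and gives $\int\ell\,d\xi=\int q\,d\eta=I(\eta)$.
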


The proof of this lemma is detailed. The reason we present it here
instead of
in an \hyperref[app]{Appendix} is the previously mentioned fact that the construction
of $A$,
$\mu$ and $\xi$ that minimize the right-hand side of (\ref{eta_R_l}) indicates
how to hit target measures $\eta$ that are absolutely continuous with respect
to the invariant measure in the proof of the Laplace lower bound.

\begin{pf}
We first prove the inequality (\ref{eta_R_l}). If the right-hand side of
(\ref{eta_R_l}) is $\infty$, there is nothing to prove. Hence, we
assume it is
finite. First, assume $A>0$, in which case $R ( \mu\llVert  [
\mu ] _{1}\otimes\alpha  ) <\infty$. Define
%
\begin{equation}
Q\doteq\int_{S}q ( x ) \pi ( dx ), \label{q}%
\end{equation}
so that by (\ref{pi_pi_tilde})
%
\begin{equation}
\widetilde{\pi} ( dx ) =q ( x ) \pi ( dx ) /Q. \label{q_pi}%
\end{equation}
Since $\widetilde{\pi}$ is invariant under $\alpha$,
by \cite{dupell4}, Lemma~8.6.2, $ [ \mu ] _{1}\ll\widetilde{\pi}$. By (\ref{qk}),
$q$ is bounded from below, and hence $ [ \mu ] _{1}\ll\pi$. Recall
that the definition of $I$ in (\ref{I1}) uses $\theta=d\eta/d\pi$. Define
$\Theta\doteq \{ x\in S\dvtx\theta ( x ) =0 \} $. By
(\ref{constr4}),
%
\begin{eqnarray}
\label{mu_pi}%
[ \mu ] _{1} ( dx ) & =&\frac{\int_{%
\mathbb{R}
_{+}}u\xi_{2|1} ( du|x ) }{A} [
\xi ] _{1} ( dx )
\nonumber
\\
& =&\frac{\int_{%
\mathbb{R}
_{+}}u\xi_{2|1} ( du|x ) }{A}q ( x ) \eta ( dx )
\\
& =&\frac{\int_{%
\mathbb{R}
_{+}}u\xi_{2|1} ( du|x ) }{A}q ( x ) \theta ( x ) \pi ( dx ),\nonumber
\end{eqnarray}
where for a measure $\nu$ on $S\times%
\mathbb{R}
_{+}$, $\nu_{2|1}$ denotes the regular conditional distribution on the second
argument given the first. Thus, $ [ \mu ] _{1} ( \Theta
)
=0$. Now suppose that
\[
\int_{S\times S}\theta^{1/2} ( x ) \theta^{1/2} ( y
) q ( x ) \alpha ( x,dy ) \pi ( dx ) =0.
\]
Then for $\pi$-a.e. $x\in S\setminus\Theta$, $\alpha (
x,\Theta ) =1$, and hence $(\mu_{1}\otimes\alpha)[ ( S\setminus
\Theta ) \times\Theta]=1$. On the other hand, $\mu (  (
S\setminus\Theta ) \times\Theta ) =0$ due to $ [ \mu ]
_{1}= [ \mu ] _{2}$. This violates the fact that $R (
\mu\llVert  [ \mu ] _{1}\otimes\alpha  ) <\infty$.
We conclude that
\[
\int_{S\times S}\theta^{1/2} ( x ) \theta^{1/2} ( y
) q ( x ) \alpha ( x,dy ) \pi ( dx ) >0.
\]
Lemma~\ref{var_for} implies that
%
\begin{eqnarray}
\label{mu_pi_tilde}%
&&-\log \int_{S\times S}
\theta^{1/2} ( x ) \theta^{1/2} ( y ) \alpha ( x,dy ) \widetilde{
\pi} ( dx )
\nonumber
\\
&&\qquad =-\log\int_{S\times S}e^{({1}/{2}) [ \log\theta ( x )
+\log\theta ( y )  ] }\alpha ( x,dy ) \widetilde{
\pi} ( dx )
\\
&&\qquad \leq R ( \mu\llVert \widetilde{\pi}\otimes\alpha ) -
\frac{1}{2}\int_{S\times S} \bigl[ \log\theta ( x ) +\log
\theta ( y ) \bigr] \mu ( dx,dy ).\nonumber
\end{eqnarray}
Strictly speaking, the inequality above does not fall into the
framework of
Lemma~\ref{var_for} because $\log\theta$ is not bounded. However, if
one goes
through the proof of this lemma (\cite{dupell4}, Proposition~1.4.2), then the
above inequality is true as long as the right-hand side is not of the form
$\infty-\infty$. Toward this end, it suffices to prove
%
\begin{equation}\qquad
\frac{1}{2}\int_{S\times S} \bigl[ \log\theta ( x ) +\log
\theta ( y ) \bigr] \mu ( dx,dy ) =\int_{S}\log \theta ( x )
[ \mu ] _{1} ( dx ) <\infty. \label{theta_mu}%
\end{equation}
In the \hyperref[app]{Appendix}, we will prove [this being the only place where Condition
\ref{cond2'} is used] that
%
\begin{equation}
R \bigl( [ \mu ] _{1}\llVert \widetilde{\pi} \bigr) <\infty.
\label{inequality}%
\end{equation}
For now, we assume this is true. Using (\ref{q}), (\ref{q_pi}) and
(\ref{mu_pi}) to evaluate the relative entropy,
%
\begin{eqnarray}\label{u_pi_tilde}%
\infty>R \bigl( [ \mu ] _{1}\llVert \widetilde{\pi} \bigr) &=&
\int_{S}\log \biggl( \int_{%
\mathbb{R}
_{+}}u
\xi_{2|1} ( du|x ) \biggr) [ \mu ] _{1} ( dx )
\nonumber
\\[-8pt]
\\[-8pt]
\nonumber
&&{}+\int
_{S}\log\theta ( x ) [ \mu ] _{1} ( dx ) +\log
\frac{Q}{A}.
\end{eqnarray}
We know from (\ref{qk}) that $Q\geq K_{1}$. Also, by (\ref{constr4})
and the
nonnegativity of~$\ell$
%
\begin{eqnarray}\label{u_eta}
& &\int_{S}\log \biggl( \int_{%
\mathbb{R}
_{+}}u
\xi_{2|1} ( du|x ) \biggr) [ \mu ] _{1} ( dx )
\nonumber
\\
&&\qquad =\frac{1}{A}\int_{S} \biggl( \int
_{%
\mathbb{R}
_{+}}u\xi_{2|1} ( du|x ) \biggr) \log \biggl( \int
_{%
\mathbb{R}
_{+}}u\xi_{2|1} ( du|x ) \biggr) [ \xi ]
_{1} ( dx )
\nonumber
\\
&&\qquad =\frac{1}{A}\int_{S}\ell \biggl( \int
_{%
\mathbb{R}
_{+}}u\xi_{2|1} ( du|x ) \biggr) [ \xi ]
_{1} ( dx )
\nonumber
\\[-8pt]
\\[-8pt]
\nonumber
&&\qquad\quad{}+\frac{1}{A}\int_{S\times%
\mathbb{R}
_{+}}u\xi (
dx,du ) -\frac{1}{A}\int_{S} [ \xi ] _{1} (
dx )
\\
&&\qquad =\frac{1}{A}\int_{S}\ell \biggl( \int
_{%
\mathbb{R}
_{+}}u\xi_{2|1} ( du|x ) \biggr) [ \xi ]
_{1} ( dx ) +\int_{S} [ \mu ] _{1} ( dx
) -\frac{1}%
{A}\int_{S}q ( x ) \eta ( dx )\nonumber
\\
&&\qquad \geq1-\frac{1}{A}K_{2}.
\nonumber
\end{eqnarray}
The second constraint in (\ref{constr4}) is used for the first
equality; the
definition of $\ell$ gives the second equality; both parts of (\ref{constr4})
assure the third equality; finally the nonnegativity of $\ell$ is used. Thus,
rearranging (\ref{u_pi_tilde}) gives (\ref{theta_mu}).

The chain rule of relative entropy gives
%
\begin{eqnarray}
\label{mu_theta}%
&& R ( \mu\llVert \widetilde{\pi}\otimes\alpha
 ) -\frac{1}{2}\int_{S\times S} \bigl[ \log\theta
( x ) +\log \theta ( y ) \bigr] \mu ( dx,dy )
\nonumber
\\
&&\qquad =R \bigl( [ \mu ] _{1}\llVert \widetilde{\pi} \bigr) +\int
_{S}R ( \mu_{2|1}\llVert \alpha )
[ \mu ] _{1} ( dx ) -\int_{S}\log\theta ( x ) [
\mu ] _{1} ( dx )
\\
&&\qquad =R \bigl( [ \mu ] _{1}\llVert \widetilde{\pi} \bigr) +R
\bigl( \mu\llVert [ \mu ] _{1}\otimes\alpha \bigr) -\int
_{S}\log\theta ( x ) [ \mu ] _{1} ( dx ).\nonumber
\end{eqnarray}
By (\ref{u_pi_tilde}) and (\ref{u_eta}) and the convexity of $\ell$
%
\begin{eqnarray}
\label{xi_q} && R \bigl( [ \mu ] _{1}\llVert \widetilde{\pi}
\bigr) -\int_{S}\log\theta ( x ) [ \mu ] _{1} ( dx
)
\nonumber
\\
&&\qquad =\int_{S}\log \biggl( \int_{%
\mathbb{R}
_{+}}u
\xi_{2|1} ( du|x ) \biggr) [ \mu ] _{1} ( dx ) +\log
\frac{Q}{A}
\nonumber
\\
&&\qquad =\frac{1}{A}\int_{S}\ell \biggl( \int
_{%
\mathbb{R}
_{+}}u\xi_{2|1} ( du|x ) \biggr) [ \xi ]
_{1} ( dx )\\
&&\qquad\quad{} +\int_{S} [ \mu ] _{1} ( dx
) -\frac{1}%
{A}\int_{S}q ( x ) \eta ( dx ) +\log
\frac{Q}%
{A}
\nonumber\\
&&\qquad \leq\frac{1}{A}\int_{S\times%
\mathbb{R}
_{+}}\ell ( u ) \xi ( dx,du )
+1-\frac{1}{A}\int_{S}q ( x ) \eta ( dx ) +\log
\frac{Q}{A}.\nonumber 
\end{eqnarray}
In summary (\ref{mu_pi_tilde}), (\ref{mu_theta}) and (\ref{xi_q}) imply
\begin{eqnarray*}
&&-\log \int_{S\times S}\theta^{1/2} ( x )
\theta^{1/2} ( y ) q ( x ) \alpha ( x,dy ) \pi ( dx )
\\
&&\qquad =-\log\int_{S\times S}\theta^{1/2} ( x )
\theta^{1/2} ( y ) \alpha ( x,dy ) \widetilde{\pi} ( dx ) -\log Q
\\
&&\qquad \leq R \bigl( \mu\llVert [ \mu ] _{1}\otimes\alpha \bigr) +
\frac{1}{A}\int_{S\times%
\mathbb{R}
_{+}}\ell ( u ) \xi ( dx,du )\\
&&\qquad\quad{} +1-
\frac{1}{A}\int_{S}q ( x ) \eta ( dx ) +\log
\frac{1}{A}.
\end{eqnarray*}
Thus,
\begin{eqnarray*}
&& -\int_{S\times S}\theta^{1/2} ( x ) \theta^{1/2}
( y ) q ( x ) \alpha ( x,dy ) \pi ( dx )
\nonumber
\\[-8pt]
\\[-8pt]
\nonumber
&&\qquad \leq-\exp \biggl\{ - \biggl( R \bigl( \mu\llVert [ \mu ] _{1}\otimes
\alpha \bigr) +\frac{1}{A}\int_{S\times%
\mathbb{R}
_{+}}\ell ( u )
\xi ( dx,du ) \\
&&\hspace*{92pt}\qquad\quad{}+1-\frac{1}{A}\int_{S}q ( x ) \eta ( dx
) +\log\frac{1}{A} \biggr) \biggr\}.
\end{eqnarray*}
Equation (\ref{eta_R_l}) then follows from the fact that $-e^{-r}\leq ar+a\log
a-a$ for
any $r\in%
\mathbb{R}
$ and $a\in%
\mathbb{R}
_{+}$ by taking $a=A$ and
\[
r=R \bigl( \mu\llVert [ \mu ] _{1}\otimes\alpha \bigr) +
\frac{1}{A}\int_{S\times%
\mathbb{R}
_{+}}\ell ( u ) \xi ( dx,du ) +1-
\frac{1}{A}\int_{S}q ( x ) \eta ( dx ) +\log
\frac{1}{A}.
\]
For the case when $A=0$, (\ref{constr4}) implies that $\int_{%
\mathbb{R}
_{+}}u\xi ( dx,du ) =0$, which means that $\int_{%
\mathbb{R}
_{+}}u\xi_{2|1} ( du|x ) =0$ $ [ \xi ] _{1}$-a.e. Hence,
by the convexity of $\ell$ and $q ( x ) \eta ( dx )
=[\xi]_{1} ( dx ) $,
\begin{eqnarray*}
\int_{S\times%
\mathbb{R}
_{+}}\ell ( u ) \xi ( dx,du ) & \geq&\int
_{S}%
\ell \biggl( \int_{%
\mathbb{R}
_{+}}u
\xi_{2|1} ( du|x ) \biggr) [ \xi ] _{1} ( dx )
\\
& =&\int_{S} [ \xi ] _{1} ( dx )
\\
& =&\int_{S}q ( x ) \eta ( dx )
\\
& \geq&\int_{S}q ( x ) \eta ( dx ) -\int
_{S\times
S}\theta^{1/2} ( x ) \theta^{1/2} ( y ) q
( x ) \alpha ( x,dy ) \pi ( dx )
\\
& =&I ( \eta ).
\end{eqnarray*}
Thus, (\ref{eta_R_l}) also holds in this case, and completes the proof
of the
first part of Lemma~\ref{DV rate}.

We now turn to the second part of Lemma~\ref{DV rate}. The definitions and
constructions used here will also be used to construct what are essentially
optimal controls to prove the reverse inequality in the next section, and
indeed the particular forms of the definitions are suggested by that
use. In
particular, $A\kappa(x)$ will correspond to a dilation of the mean for the
exponential random variables. In light of the second part of Lemma~\ref{var_for}, we define $\mu$ by
%
\begin{eqnarray}\label{mu}%
&&\frac{d\mu}{d ( \widetilde{\pi}\otimes\alpha ) } ( x,y )
\nonumber
\\[-8pt]
\\[-8pt]
\nonumber
&&\qquad \doteq\theta^{1/2} ( x ) \theta^{1/2}
( y ) \Big/ \int_{S\times S}\theta^{1/2} ( x )
\theta^{1/2} ( y ) ( \widetilde{\pi}\otimes\alpha ) ( dx,dy ) .
\end{eqnarray}
Note that by the Cauchy--Schwarz inequality, the detailed balance condition
(\ref{reversible2}) and the relation between $\pi$ and $\widetilde{\pi
}$ [see
(\ref{pi_pi_tilde})] imply
\[
\int_{S\times S}\theta^{1/2} ( x ) \theta^{1/2} ( y
) ( \widetilde{\pi}\otimes\alpha ) ( dx,dy ) \leq \int_{S\times S}
\theta ( x ) \alpha ( x,dy ) \widetilde{\pi} ( dx ) \leq\frac{K_{2}}{Q}.
\]
Hence, $\mu$ is well defined and $ [ \mu ] _{1}= [ \mu
]
_{2}$. Then Lemma~\ref{var_for} implies that
%
\begin{eqnarray}\label{theta_eq}%
&&-\log\int_{S\times S}\theta^{1/2} ( x )
\theta^{1/2} ( y ) \alpha ( x,dy ) \widetilde{\pi} ( dx )
\nonumber
\\[-8pt]
\\[-8pt]
\nonumber
&&\qquad =R ( \mu
\llVert \widetilde{\pi}\otimes\alpha ) -\int_{S}
\log\theta ( x ) [ \mu ] _{1} ( dx ).
\end{eqnarray}

If $R ( \mu\llVert \widetilde{\pi}\otimes\alpha  )
=\infty$ or $-\int_{S}\log\theta ( x )  [ \mu ]
_{1} ( dx ) =\infty$, the last display implies
\[
\int_{S\times S}\theta^{1/2} ( x ) \theta^{1/2} ( y
) q ( x ) \alpha ( x,dy ) \pi ( dx ) =0.
\]
By letting $A\doteq0$ and $\xi ( dx,du ) \doteq q ( x )
\eta ( dx ) \delta_{0} ( du ) $, then $\xi$ and $\mu$
satisfy (\ref{constr4}) and
\[
AR \bigl( \mu\llVert [ \mu ] _{1}\otimes\alpha \bigr) +\int
_{S\times%
\mathbb{R}
_{+}}\ell ( u ) \xi ( dx,du ) =\int_{S}q
( x ) \eta ( dx ) =I ( \eta ).
\]

Next, assume $R ( \mu\llVert \widetilde{\pi}\otimes\alpha
) <\infty$ and $-\int_{S}\log\theta ( x )  [
\mu ] _{1} ( dx ) <\infty$. Define $A$ by
%
\begin{equation}
A\doteq\exp \biggl\{ - \biggl[ R ( \mu\llVert \widetilde{\pi}%
\otimes\alpha ) -\int_{S}\log\theta ( x ) [ \mu
] _{1} ( dx ) -\log Q \biggr] \biggr\}. \label{A}%
\end{equation}
Define the measure
%
\begin{equation}
\rho ( dx ) \doteq q ( x ) \theta ( x ) \pi ( dx ) \label{rho_con}%
\end{equation}
and
%
\begin{equation}
\kappa\doteq d [ \mu ] _{1}/d\rho. \label{k_con}%
\end{equation}
Then for any $x\in S\setminus\Theta$ (recall $\Theta= \{ x\in
S\dvtx\theta(x)=0 \} $)
\[
\kappa ( x ) =\frac{d [ \mu ] _{1}}{d\rho} ( x ) =\frac{1}{Q\theta ( x ) }\frac{d [ \mu ] _{1}
}{d\widetilde{\pi}} ( x
).
\]
In addition,
%
\begin{eqnarray}
\label{f(x)}%
\qquad\int_{S}\kappa ( x ) \log\kappa (
x ) \rho ( dx ) & =&\int_{S}\log\kappa ( x ) [ \mu ]
_{1} ( dx )
\nonumber
\\[-8pt]
\\[-8pt]
\nonumber
& =&R \bigl( [ \mu ] _{1}\llVert \widetilde{\pi} \bigr) -\int
_{S}\log\theta ( x ) [ \mu ] _{1} ( dx ) -\log Q.
\end{eqnarray}
Define
%
\begin{equation}
b ( x ) \doteq\cases{ %
0, &\quad
$\mbox{for }x\in\Theta,$
\vspace*{2pt}\cr
A\kappa ( x ), & \quad$\mbox{for }x\notin\Theta,$}
\label{b}%
\end{equation}
and
%
\begin{equation}
\xi ( dx,du ) \doteq q ( x ) \eta ( dx ) \delta_{b ( x ) } ( du ).
\label{xi_con}%
\end{equation}
Then $\xi$ satisfies the first part of (\ref{constr4}). To see that the second
part of (\ref{constr4}) is satisfied, note that
\[
[ \mu ] _{1} ( \Theta ) =0=\int_{\Theta\times%
\mathbb{R}
_{+}}u\xi ( dx,du
)
\]
and
\begin{eqnarray*}
\int_{%
\mathbb{R}
_{+}}u\xi ( dx,du ) & =&b ( x ) q ( x ) \eta ( dx )
\\
& =&A\kappa ( x ) q ( x ) \theta ( x ) \pi ( dx )
\\
& =&A [ \mu ] _{1} ( dx ).
\end{eqnarray*}
By using the definitions we arrive at the following, each line of which is
explained below:
\begin{eqnarray*}
&& AR \bigl( \mu\llVert [ \mu ] _{1}\otimes\alpha \bigr) +\int
_{S\times\mathbb{R}_{+}}\ell ( u ) \xi ( dx,du )
\\
&&\qquad =AR \bigl( \mu\llVert [ \mu ] _{1}\otimes\alpha \bigr) +\int
_{S}\ell \bigl( b ( x ) \bigr) q ( x ) \eta ( dx )
\\
&&\qquad =AR \bigl( \mu\llVert [ \mu ] _{1}\otimes\alpha \bigr) +\int
_{\Theta}q ( x ) \eta ( dx ) +\int_{S\setminus\Theta}\ell
\bigl( b ( x ) \bigr) \rho ( dx )
\\
&&\qquad =AR \bigl( \mu\llVert [ \mu ] _{1}\otimes\alpha \bigr) +\int
_{S}q ( x ) \eta ( dx ) +A\log A-A\\
&&\qquad\quad{}+A\int_{S}
\kappa ( x ) \log\kappa ( x ) \rho ( dx )
\\
&&\qquad =\int_{S}q ( x ) \eta ( dx ) +A\log A-A\\
&&\qquad\quad{}+A \biggl( R
( \mu\llVert \widetilde{\pi}\otimes\alpha ) -\int
_{S}\log\theta ( x ) [ \mu ] _{1} ( dx ) -\log Q
\biggr)
\\
&&\qquad =\int_{S}q ( x ) \eta ( dx ) -A.
\end{eqnarray*}
The first equality uses (\ref{xi_con}) and the second uses (\ref{b}). The
third uses (\ref{b}) again, expands $\ell$, and uses $\kappa\doteq
d [
\mu ] _{1}/d\rho$ and $\eta ( \Theta ) =\rho (
\Theta ) =0$. Equality four then uses (\ref{f(x)}) and the fifth follows
from (\ref{A}). Note that (\ref{theta_eq}) and (\ref{pi_pi_tilde}) imply
%
\begin{equation}
A=\int_{S\times S}\theta^{1/2} ( x ) \theta^{1/2} (
y ) q ( x ) \alpha ( x,dy ) \pi ( dx ). \label{a}%
\end{equation}
Hence, we obtain
\[
AR \bigl( \mu\llVert [ \mu ] _{1}\otimes\alpha \bigr) +\int
_{S}\ell ( u ) \xi ( dx,du ) =I ( \eta ) .
\]
\upqed\end{pf}

The representation formula (\ref{rep_gen}), the lower bound (\ref
{lower2}) and
Lemma~\ref{DV rate} together give
%
\begin{eqnarray}\label{final_lb}
&& \liminf_{T\rightarrow\infty}-\frac{1}{T}\log E \bigl[ \exp \bigl\{
-TF(\eta_{T})-T\cdot\infty\cdot \bigl( 1_{ \{ r_{T}/T \} ^{c}%
} (
R_{T}/T ) \bigr) \bigr\} \bigr]
\nonumber
\\[-8pt]
\\[-8pt]
\nonumber
&&\qquad \geq\inf_{\eta\in\mathcal{P} ( S ) } \bigl[ F ( \eta ) +I ( \eta ) \bigr].
\end{eqnarray}

\subsection{Combining the cases}

In the last section, we showed that (\ref{final_lb}) is valid for any sequence
$ \{ r_{T} \} $ such that $r_{T}/T\rightarrow A$ $\in [
0,C ] $. An argument by contradiction shows that the bound is
uniform in
$A$. Thus,
\begin{eqnarray*}
&& \liminf_{T\rightarrow\infty}-\frac{1}{T}\log \Biggl\{ \sum
_{r_{T}%
=1}^{ \lfloor TC \rfloor}E \bigl[ \exp \bigl\{ -TF(
\eta_{T}%
)-T\cdot\infty\cdot \bigl( 1_{ \{ r_{T}/T \} ^{c}} (
R_{T}/T ) \bigr) \bigr\} \bigr] \Biggr\}
\\
&&\qquad \geq\liminf_{T\rightarrow\infty}-\frac{1}{T}\log \Biggl\{ TC\cdot
\bigvee_{r_{T}=1}^{ \lfloor TC \rfloor}E \bigl[ \exp \bigl\{ -TF(
\eta_{T})\\
&&\qquad\quad\hspace*{149pt}{}-T\cdot\infty\cdot \bigl( 1_{ \{ r_{T}/T \} ^{c}%
} ( R_{T}/T
) \bigr) \bigr\} \bigr] \Biggr\}
\\
&&\qquad \geq\inf_{\eta\in\mathcal{P} ( S ) } \bigl[ F ( \eta ) +I ( \eta ) \bigr].
\end{eqnarray*}
We now partition $E [ \exp \{ -TF(\eta_{T}) \}  ] $
according to the two cases to obtain the overall lower bound
\begin{eqnarray*}
&&\liminf_{T\rightarrow\infty}  -\frac{1}{T}\log E \bigl[ \exp \bigl\{
-TF(\eta_{T}) \bigr\} \bigr]
\\
&&\qquad \geq\min \Bigl\{ \inf_{\eta\in\mathcal{P} ( S ) } \bigl[ F ( \eta ) +I ( \eta )
\bigr], [ -C+C\log C+K_{2}-C\log K_{2} ] \Bigr\}.
\end{eqnarray*}
Letting $C\rightarrow\infty$ we have the desired Laplace upper bound
%
\begin{equation}
\liminf_{T\rightarrow\infty}-\frac{1}{T}\log E \bigl[ \exp \bigl\{ -TF(
\eta_{T}) \bigr\} \bigr] \geq\inf_{\eta\in\mathcal{P} ( S )
} \bigl[ F (
\eta ) +I ( \eta ) \bigr]. \label{lp_upper}%
\end{equation}

\section{Proof of Laplace lower bound}
\label{lplb}

We turn to the proof of the reverse inequality
%
\begin{equation}
\limsup_{T\rightarrow\infty}-\frac{1}{T}\log E \bigl[ \exp \bigl\{ -TF(
\eta_{T}) \bigr\} \bigr] \leq\inf_{\eta\in\mathcal{P} ( S )
} \bigl[ F (
\eta ) +I ( \eta ) \bigr]. \label{lb}%
\end{equation}
Let $F$ be a nonnegative bounded and continuous function. Fix an arbitrary
$\varepsilon>0$ and choose $\eta$ such that
%
\begin{equation}
F ( \eta ) +I ( \eta ) \leq\inf_{\nu\in\mathcal{P}%
( S ) } \bigl[ F ( \nu ) +I (
\nu ) \bigr] +\varepsilon. \label{eta_1}%
\end{equation}
As pointed out in Remark~\ref{isolate}, $H$ defined in (\ref{H}) is
dense in
$\mathcal{P} ( S ) $. Since $I$ was extended from $H$ to
$\mathcal{P} ( S ) $ via lower semicontinuous regularization, we
can assume without loss of generality that $\eta\ll\pi$. Define $\theta
\doteq
d\eta/d\pi$. We now argue we can further assume there exists $\delta>0$ such
that
%
\begin{equation}
\delta\leq\theta ( x ) \leq\frac{1}{\delta} \label{eta_2}%
\end{equation}
for all $x\in S$. If $\eta^{\delta}\doteq ( 1-\delta ) \eta
+\delta\pi$ then $d\eta^{\delta}/d\pi\geq\delta$, and the continuity of $F$
and the convexity of $I$ imply that the difference between $F (
\eta^{\delta} ) +I ( \eta^{\delta} ) $ and $F (
\eta ) +I ( \eta ) $ can be made arbitrarily small.

Thus, we can assume $\theta$ is uniformly bounded from below away from zero.
Let $n\in\mathbb{N}$, and define
\[
\eta^{n} ( dx ) \doteq\theta ( x ) 1_{ \{
\theta ( x ) \leq n \} }\pi ( dx ) +
\frac
{\eta (  \{ x\dvtx\theta ( x ) >n \}  ) }%
{\pi (  \{ x\dvtx\theta ( x ) >n \}  )
}1_{ \{ \theta ( x ) >n \} }\pi ( dx ).
\]
Then $d\eta^{n}/d\pi\leq{}[\eta (  \{ x\dvtx\theta (
x )
>n \}  ) /\pi (  \{ x\dvtx\theta ( x )
>n \}  ) ]\vee n$, and since $\eta\ll\pi$ implies $\pi (
\{ x\dvtx\theta ( x ) >n \}  ) \rightarrow0$,
$\eta^{n}$ converges weakly to $\eta$. It then follows from the
continuity of
$F$ and the definition of $I$ and convexity of $\theta\rightarrow-\theta
^{1/2}$ that we can choose $\eta$ satisfying (\ref{eta_1}) with
$2\varepsilon$
replacing $\varepsilon$ and also (\ref{eta_2}). Hence, we assume $\eta$
satisfies (\ref{eta_1}) and (\ref{eta_2}). Furthermore, by Lusin's theorem
(\cite{fol}, Theorem~7.10), we can also assume that $\theta$ is continuous.

The proof of the lower bound will use the following representation. The
infimum in the representation is taken over all control measures $
\{
\bar{\alpha}_{i},\bar{\sigma}_{i} \} $, and the properties of such
measures and how $\bar{\eta}_{T}$ and $\bar{R}_{T}$ are constructed
from them
were discussed immediately above the similar representation (\ref{rep_gen}).
The proof of the lemma is given in the \hyperref[app]{Appendix}.

\begin{lemma}
\label{rep_ran}Let $F\dvtx\mathcal{P} ( S ) \rightarrow%
\mathbb{R}
$ be bounded and continuous. Then
\[
-\frac{1}{T}\log E \bigl[ \exp \bigl\{ -TF ( \eta_{T} ) \bigr\}
\bigr] =\inf E \Biggl[ F ( \bar{\eta}_{T} ) +\frac{1}{T}\sum
_{i=1}^{\bar{R}_{T}} \bigl( R ( \bar{
\alpha}_{i-1}\llVert \alpha ) +R ( \bar{
\sigma}_{i}\llVert \sigma ) \bigr) \Biggr],
\]
where the infimum is taken over all control measures $ \{ \bar
{\alpha
}_{i},\bar{\sigma}_{i} \} $.
\end{lemma}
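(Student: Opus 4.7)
The strategy is to put the baseline and controlled processes on a common canonical path space, apply the variational formula (Lemma \ref{var_for}) on a deterministic truncated horizon, decompose the relative entropy via the chain rule (Theorem \ref{chain_rule}), observe that the sum automatically terminates at $\bar{R}_{T}$ because post-$\bar{R}_{T}$ controls are free, and finally remove the truncation. This is a minor extension of the device that produced the representation (\ref{rep_gen}) (which used a deterministic $r_{T}$), so the argument parallels \cite[Proposition 4.5.1]{dupell4} but with a random horizon.

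\medskip

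First I would realize $(X_{0},\tau_{1},X_{1},\tau_{2},\ldots)$ as coordinate maps on $\Omega=(S\times\mathbb{R}_{+})^{\mathbb{N}}$ equipped with the filtration $\mathcal{F}_{i}=\sigma(X_{0},\tau_{1},\ldots,X_{i})$. Under the baseline law $P$, the $\tau_{i}$ are i.i.d.\ standard exponentials (law $\sigma$) and $X_{i}\mid\mathcal{F}_{i-1}\vee\sigma(\tau_{i})$ has law $\alpha(X_{i-1},\cdot)$; a sequence of predictable kernels $\{\bar{\alpha}_{i},\bar{\sigma}_{i}\}$ determines, by Ionescu--Tulcea, a new law $\bar{P}$ on $\Omega$, from which $\bar{R}_{T}$ and $\bar{\eta}_{T}$ are read off via (\ref{R_T}) and (\ref{eta_T}). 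Next, fix a deterministic $N\in\mathbb{N}$ and a constant $M>0$, and apply Lemma \ref{var_for}(a) on $(\Omega,\mathcal{F}_{N},P|_{\mathcal{F}_{N}})$ with the bounded function $k_{N}=TF(\eta_{T})\,\mathbf{1}_{\{R_{T}\le N\}}+M\,\mathbf{1}_{\{R_{T}>N\}}$. Iterating the chain rule across the factorization $\mathcal{F}_{N}=\sigma(X_{0})\otimes\sigma(\tau_{1})\otimes\sigma(X_{1})\otimes\cdots\otimes\sigma(X_{N})$ yields
\[
R\bigl(\bar{P}|_{\mathcal{F}_{N}}\,\bigl\Vert\,P|_{\mathcal{F}_{N}}\bigr)=E_{\bar{P}}\!\left[\sum_{i=1}^{N}\bigl(R(\bar{\sigma}_{i}\Vert\sigma)+R(\bar{\alpha}_{i-1}\Vert\alpha)\bigr)\right],
\]
which, combined with the variational formula, gives a finite-horizon representation with $N$ in place of $\bar{R}_{T}$ and an extra $M$-penalty on $\{\bar{R}_{T}>N\}$.

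\medskip

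I would then observe that on $\{\bar{R}_{T}\le N\}$ the empirical measure $\bar{\eta}_{T}$ depends only on $(\bar{X}_{0},\bar{\tau}_{1},\ldots,\bar{X}_{\bar{R}_{T}-1})$, so choosing $\bar{\alpha}_{i-1}=\alpha$, $\bar{\sigma}_{i}=\sigma$ for $i>\bar{R}_{T}$ is optimal: it leaves $\bar{\eta}_{T}$ unchanged and contributes zero to the entropy sum. Consequently the sum truncates at $\bar{R}_{T}$, yielding exactly the form in the lemma restricted to the event $\{\bar{R}_{T}\le N\}$ and the $M$-penalty elsewhere. Removing the truncation is done by letting first $M\to\infty$ and then $N\to\infty$. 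Because $q\ge K_{1}$, one has $s_{i}\le\tau_{i}/K_{1}$, so $R_{T}$ has exponential tails under $P$ and $E_{P}[e^{-TF(\eta_{T})}\mathbf{1}_{\{R_{T}\le N\}}]\uparrow E_{P}[e^{-TF(\eta_{T})}]$; on the control side, any $\bar{P}$ with finite entropy cost must place zero mass on $\{\bar{R}_{T}=\infty\}$, so the sums converge by monotone convergence and the infima match in the limit.

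\medskip

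The main obstacle is the $\bar{P}$-dependent random horizon $\bar{R}_{T}$: Lemma \ref{var_for} is naturally stated for a fixed measurable space and a bounded integrand, while here the horizon appearing in the relative-entropy sum varies with the control and $F(\bar{\eta}_{T})$ is determined only up to time $\bar{R}_{T}$. The truncation at deterministic $N$ followed by the ``free controls after $\bar{R}_{T}$'' observation is what bridges this gap; the quantitative ingredient that makes the truncation inactive in the limit is the lower bound $q\ge K_{1}>0$, which controls the tails of both $R_{T}$ and $\bar{R}_{T}$.
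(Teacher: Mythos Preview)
Your plan is essentially the paper's own argument: truncate at a deterministic horizon, apply the variational formula and the chain rule on the resulting finite product space, observe that post-$\bar{R}_{T}$ controls can be reset to $(\alpha,\sigma)$ at zero cost so the entropy sum stops at $\bar{R}_{T}\wedge N$, and then remove the truncation by monotone/dominated convergence. The only difference is the truncation device. Rather than your penalty $M\mathbf{1}_{\{R_{T}>N\}}$ followed by a double limit $M\to\infty$, $N\to\infty$, the paper replaces $R_{T}$ by $R_{T}\wedge k$ inside the formula (\ref{eta_T}) to produce a modified empirical measure $\eta_{T}^{k}$ that is always a bona fide element of $\mathcal{P}(S)$; then $F(\eta_{T}^{k})$ is bounded outright and a single limit $k\to\infty$ suffices. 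One slip in your last paragraph: the tail bound on $R_{T}$ under $P$ needs $q\le K_{2}$ (giving $s_{i}\ge\tau_{i}/K_{2}$ and hence $\{R_{T}>N\}\subset\{\sum_{i=1}^{N}\tau_{i}\le K_{2}T\}$), not $q\ge K_{1}$; and no tail control on the \emph{controlled} $\bar{R}_{T}$ is actually needed or invoked in the paper's version.
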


Suppose that given any measure $\eta\in\mathcal{P} ( S ) $
satisfying (\ref{eta_1}) and (\ref{eta_2}), one can construct $\bar
{\alpha
}_{i}$ and $\bar{\sigma}_{i}$ such that given any subsequence of $T$,
there is
a further subsequence $T_{n}$ such that
\[
\lim_{T_{n}\rightarrow\infty}E \Biggl[ F ( \bar{\eta}_{T_{n}} ) +
\frac{1}{T_{n}}\sum_{i=1}^{\bar{R}_{T_{n}}} \bigl( R
( \bar{\alpha }_{i-1}\llVert \alpha ) +R ( \bar{
\sigma}_{i}\llVert \sigma ) \bigr) \Biggr] =F ( \eta ) +I
( \eta ).
\]
Then Lemma~\ref{rep_ran} implies the Laplace lower bound (\ref{lb}). The
construction of suitable $\bar{\alpha}_{i}$ and $\bar{\sigma}_{i}$
turns on
many of the same constructions as those used in the proof of the second part
of Lemma~\ref{DV rate}. We first define $\mu\in\mathcal{P} ( S\times
S ) $ as in (\ref{mu}). Then automatically $ [ \mu ]
_{1}= [ \mu ] _{2}$, and hence if we define $p$ as the regular
conditional probability such that $\mu= [ \mu ] _{1}\otimes p$,
then $ [ \mu ] _{1}$ is invariant under $p$ (\cite{dupell4},
Lemma~8.5.1(a)). Define $\bar{\alpha}_{i}\doteq p$ for each $i$, and let
$ \{ \bar{X}_{i} \} $ be the corresponding Markov chain. Next,
define $\rho ( dx ) \doteq q ( x ) \eta (
dx ) $ and
%
\begin{equation}
\kappa ( x ) \doteq\frac{d [ \mu ] _{1}}{d\rho} ( x ) =\frac{1}{Q\theta ( x ) }
\frac{d [ \mu ] _{1}
}{d\widetilde{\pi}} ( x ). \label{k}%
\end{equation}
By (\ref{eta_2}), there is $M<\infty$ such that $1/M\leq\kappa\leq M$,
and due
to the continuity of $\theta$, $\kappa$ is also continuous. Notice that
%
\begin{equation}
\eta ( dx ) = \bigl( q ( x ) \kappa ( x ) \bigr) ^{-1} [ \mu ]
_{1} ( dx ). \label{eta_mu1}%
\end{equation}
Assumption (\ref{eta_2}) guarantees that
\begin{eqnarray*}
-\log\int_{S\times S}\theta^{1/2} ( x )
\theta^{1/2} ( y ) ( \widetilde{\pi}\otimes\alpha ) ( dx,dy ) &<&\infty
\quad\mbox{and}\\
-\int_{S}\log\theta ( x ) [ \mu ] _{1} (
dx ) &<&\infty,
\end{eqnarray*}
and (\ref{theta_eq}) then implies that $R ( \mu\llVert \widetilde
{\pi
}\otimes\alpha  ) <\infty$. Define $A$ as in (\ref{a}). Let
$\bar{\sigma}_{i}$ be the exponential distribution with mean $ [
A\kappa ( \bar{X}_{i-1} )  ] ^{-1}$ for each $i$. Thus,
we can
construct a Markov jump process $\bar{X} ( t ) $ using $\bar
{\alpha}_{i}$ and $\bar{\sigma}_{i}$ instead of $\alpha$ and $\sigma$,
and the
infinitesimal $\bar{\mathcal{L}}$ generator will be bounded and continuous
and takes the form:
\[
\bar{\mathcal{L}}f ( x ) =A\kappa ( x ) q ( x ) \int_{S}
\bigl[ f ( y ) -f ( x ) \bigr] p ( x,dy ).
\]
Equation~(\ref{eta_mu1}) and the fact that $ [ \mu ] _{1}$ is invariant
under $p$ imply $\int_{S} ( \bar{\mathcal{L}}f ( x )  )\*
\eta ( dx ) =0$, and $\eta$ is an invariant distribution of the
continuous time process $\bar{X}$. We claim that $\eta$ is the unique
invariant distribution of $\bar{X}$. Indeed, by \cite{ethkur}, Proposition~4.9.2, any invariant distribution $\nu$ for $\bar{X}$ satisfies
$\int_{S} ( \bar{\mathcal{L}}f ( x )  ) \nu (
dx ) =0$. If we define
\[
\widetilde{\nu} ( dx ) \doteq\frac{A\kappa ( x )
q ( x ) \nu ( dx ) }{\int_{S}A\kappa ( x )
q ( x ) \nu ( dx ) },
\]
then $\widetilde{\nu}$ is invariant under $p$. However, by Condition
\ref{cond2} and \cite{dupell4}, Lemma~8.6.3(c), the invariant measure
under $p$
is unique, and hence the invariant measure of $\bar{X}$ is also unique.
By the
definition of $\bar{\eta}_{T}$ in (\ref{eta_T}),
%
\begin{eqnarray}
\label{eta_bar2}%
\qquad\bar{\eta}_{T} ( \cdot ) & =&\frac{1}{T}
\int_{0}^{T}\delta _{\bar{X} ( t ) } ( \cdot ) \,dt
\nonumber
\\[-8pt]
\\[-8pt]
\nonumber
& =&\frac{1}{T} \Biggl[ \sum_{i=1}^{\bar{R}_{T}-1}
\delta_{\bar
{X}_{i-1}} ( dx ) \frac{\bar{\tau}_{i}}{q ( \bar{X}_{i-1} ) }+\delta _{\bar{X}_{\bar{R}_{T}-1}} ( dx )
\Biggl( T-\sum_{i=1}^{\bar{R}%
_{T}-1}\frac{\bar{\tau}_{i}}{q ( \bar{X}_{i-1} ) }
\Biggr) \Biggr] .
\end{eqnarray}
Since $S$ is compact, we can extract a subsequence of $T$ such that
$\bar{\eta
}_{T}$ converges weakly, and by \cite{ethkur}, Theorem~4.9.3, this weak limit
is $\eta$. We claim the following along the same subsequence.

\begin{lemma}
\label{lem:costs}$E [ \bar{R}_{T}/T ] \rightarrow A$, $E [
\sum_{i=1}^{\bar{R}_{T}}R ( \bar{\sigma}_{i}\llVert \sigma
) /T ] \rightarrow\int_{S}\ell ( A\kappa ( x )
)  q ( x ) \eta ( dx ) $ and $E [ \sum_{i=1}^{\bar{R}_{T}}\delta_{\bar{X}_{i-1}} ( dx ) /T ]
\rightarrow A [ \mu ] _{1} ( dx ) $.
\end{lemma}

\begin{pf}
As in the proof of the upper bound, a minor nuisance is dealing with the
residual time $T-\sum_{i=1}^{\bar{R}_{T}}\bar{\tau}_{i}$. However, this is
more easily controlled here since it is bounded by an exponential with known
mean. Since $\bar{\eta}_{T}\rightarrow\eta$ weakly, we have for any bounded
and continuous function $f$ on the space of subprobability measures on $S$
that $\lim_{T\rightarrow\infty}E [ f ( \bar{\eta}_{T} )
] =f ( \eta ) $. To prove the first part of the lemma,
define $f$ by
\[
f ( \nu ) \doteq\int_{S}\kappa ( x ) q ( x ) \nu ( dx ).
\]
Since both $\kappa$ and $q$ are bounded and continuous, $f$ is also bounded
and continuous. Using (\ref{eta_mu1})
%
\begin{equation}
f ( \eta ) =\int_{S}\kappa ( x ) q ( x ) \eta ( dx ) =\int
_{S} [ \mu ] _{1} ( dx ) =1. \label{f1}%
\end{equation}
Thus, $\lim_{T\rightarrow\infty}E [ f ( \bar{\eta}_{T} )
] =1$. Now by (\ref{eta_bar2}) and the definition of $\bar{R}_{T}$
\begin{eqnarray*}
&& E \Biggl[ \Biggl\llvert f ( \bar{\eta}_{T} ) -f \Biggl(
\frac{1}%
{T}\sum_{i=1}^{\bar{R}_{T}}
\delta_{\bar{X}_{i-1}} ( dx ) \frac
{\bar{\tau}_{i}}{q ( \bar{X}_{i-1} ) } \Biggr) \Biggr\rrvert \Biggr]
\\
&&\qquad =\frac{1}{T}E \Biggl[ \kappa ( \bar{X}_{\bar{R}_{T}-1} ) q (
\bar{X}_{\bar{R}_{T}-1} ) \Biggl( \sum_{i=1}^{\bar{R}_{T}}
\frac
{\bar{\tau}_{i}}{q ( \bar{X}_{i-1} ) }-T \Biggr) \Biggr]
\\
&&\qquad \leq\frac{1}{T}E \biggl[ \kappa ( \bar{X}_{\bar{R}_{T}-1} ) q (
\bar{X}_{\bar{R}_{T}-1} ) \frac{\bar{\tau}_{\bar{R}_{T}}%
}{q ( \bar{X}_{\bar{R}_{T}-1} ) } \biggr]
\\
&&\qquad \leq\frac{M}{T}E [ \bar{\tau}_{\bar{R}_{T}} ]\leq\frac{AM^{2}}{T}\rightarrow0
\end{eqnarray*}
as $T\rightarrow\infty$. Hence,
\[
\lim_{T\rightarrow\infty}E \Biggl[ f \Biggl( \frac{1}{T}\sum
_{i=1}^{\bar
{R}_{T}%
}\delta_{\bar{X}_{i-1}} ( dx )
\frac{\bar{\tau}_{i}}{q (
\bar{X}_{i-1} ) } \Biggr) \Biggr] =1.
\]
Recall that $\mathcal{F}_{i}$ is the $\sigma$-algebra generated by
$ \{
( \bar{X}_{0},\ldots,\bar{X}_{i} ), ( \bar{\tau}_{1}%
,\ldots,\bar{\tau}_{i} )  \} $. Then
\begin{eqnarray*}
&& E \Biggl[ f \Biggl( \frac{1}{T}\sum_{i=1}^{\bar{R}_{T}}
\delta_{\bar
{X}_{i-1}%
} ( dx ) \frac{\bar{\tau}_{i}}{q ( \bar{X}_{i-1} )
} \Biggr) \Biggr]
\\
&&\qquad =\frac{1}{T}E \Biggl[ \sum_{i=1}^{\bar{R}_{T}}
\kappa ( \bar{X}%
_{i-1} ) q ( \bar{X}_{i-1} )
\frac{\bar{\tau}_{i}}{q (
\bar{X}_{i-1} ) } \Biggr]
\\
&&\qquad =\frac{1}{T}E \Biggl[ \sum_{i=1}^{\infty}
\kappa ( \bar {X}_{i-1} ) \bar{\tau}_{i}\mathbf{1} \Biggl(
\sum_{j=1}^{i-1}\frac{\bar{\tau}_{j}%
}{q ( \bar{X}_{j-1} ) }\leq T
\Biggr) \Biggr]
\\
&&\qquad =\frac{1}{T}\sum_{i=1}^{\infty}E
\Biggl[ E \Biggl[  \kappa ( \bar{X}_{i-1} ) \bar{
\tau}_{i}\mathbf{1} \Biggl( \sum_{j=1}%
^{i-1}\frac{\bar{\tau}_{j}}{q ( \bar{X}_{j-1} ) }\leq T \Biggr) \Big\rrvert \mathcal{F}_{i-1}
\Biggr] \Biggr]
\\
&&\qquad =\frac{1}{T}\sum_{i=1}^{\infty}E
\Biggl[ \kappa ( \bar {X}_{i-1} ) \mathbf{1} \Biggl( \sum
_{j=1}^{i-1}\frac{\bar{\tau}_{j}}{q ( \bar
{X}_{j-1} ) }\leq T \Biggr) E [ \bar{
\tau}_{i}|\mathcal{F}%
_{i-1} ] \Biggr]
\\
&&\qquad =\frac{1}{T}\sum_{i=1}^{\infty}E
\Biggl[ \kappa ( \bar {X}_{i-1} ) \mathbf{1} \Biggl( \sum
_{j=1}^{i-1}\frac{\bar{\tau}_{j}}{q ( \bar
{X}_{j-1} ) }\leq T \Biggr)
\frac{1}{A\kappa ( \bar{X}%
_{i-1} ) } \Biggr]
\\
&&\qquad =\frac{1}{AT}E \Biggl[ \sum_{i=1}^{\infty}
\mathbf{1} \Biggl( \sum_{j=1}^{i-1}
\frac{\bar{\tau}_{j}}{q ( \bar{X}_{j-1} ) }\leq T \Biggr) \Biggr]
\\
&&\qquad =\frac{1}{A}E \biggl[ \frac{\bar{R}_{T}}{T} \biggr].
\end{eqnarray*}
This completes the proof of the first statement in the lemma.

The proof of the second statement is similar. Define $f$ by
\[
f ( \nu ) \doteq\int_{S}\ell \bigl( A\kappa ( x ) \bigr) q
( x ) \nu ( dx ).
\]
Then as before,
\[
f ( \eta ) =\lim_{T\rightarrow\infty}E \bigl[ f ( \eta _{T} )
\bigr] =\lim_{T\rightarrow\infty}E \Biggl[ f \Biggl( \frac{1}%
{T}\sum
_{i=1}^{\bar{R}_{T}}\delta_{\bar{X}_{i-1}} ( dx )
\frac
{\bar{\tau}_{i}}{q ( \bar{X}_{i-1} ) } \Biggr) \Biggr].
\]
Using $g ( x ) =x\ell ( 1/x ) $ and Lemma~\ref{exp_re},
we have
\begin{eqnarray*}
&& E \Biggl[ f \Biggl( \frac{1}{T}\sum_{i=1}^{\bar{R}_{T}}
\delta_{\bar
{X}_{i-1}%
} ( dx ) \frac{\bar{\tau}_{i}}{q ( \bar{X}_{i-1} )
} \Biggr) \Biggr]
\\
& &\qquad=E \Biggl[ \frac{1}{T}\sum_{i=1}^{\bar{R}_{T}}
\ell \bigl( A\kappa ( \bar{X}_{i-1} ) \bigr) \bar{\tau}_{i}
\Biggr]
\\
&&\qquad =\frac{1}{T}\sum_{i=1}^{\infty}E
\Biggl[ \ell \bigl( A\kappa ( \bar {X}_{i-1} ) \bigr) \mathbf{1}
\Biggl( \sum_{j=1}^{i-1}\frac
{\bar{\tau}_{j}}{q ( \bar{X}_{j-1} ) }\leq
T \Biggr) E [ \bar{\tau}_{i}|\mathcal{F}_{i-1} ] \Biggr]
\\
&&\qquad =E \Biggl[ \frac{1}{T}\sum_{i=1}^{\bar{R}_{T}}
\frac{1}{A\kappa (
\bar{X}_{i-1} ) }\ell \bigl( A\kappa ( \bar{X}_{i-1} ) \bigr) \Biggr]
\\
&&\qquad =E \Biggl[ \frac{1}{T}\sum_{i=1}^{\bar{R}_{T}}g
\biggl( \frac{1}%
{A\kappa ( \bar{X}_{i-1} ) } \biggr) \Biggr]
\\
&&\qquad =E \Biggl[ \frac{1}{T}\sum_{i=1}^{\bar{R}_{T}}R
( \bar{\sigma}%
_{i}\llVert \sigma )
\Biggr],
\end{eqnarray*}
and the second part of the lemma follows.

The proof of the third part follows very similar lines as the first
two, and
is omitted.
\end{pf}

Now the Laplace lower bound is straightforward. The definition of $\mu$ in
(\ref{mu}), the continuity of $\theta$ and the bound (\ref{eta_2}) imply
$x\rightarrow R ( p ( x,\cdot ) \llVert \alpha (
x,\cdot )   ) $ is bounded and continuous. By Lemma~\ref{lem:costs} and the chain rule for relative entropy,
\begin{eqnarray*}
&& \lim_{T\rightarrow\infty}E \Biggl[ F ( \bar{\eta}_{T} ) +
\frac
{1}{T}\sum_{i=1}^{\bar{R}_{T}} \bigl( R
( \bar{\alpha}_{i-1}\llVert \alpha ) +R ( \bar{
\sigma}_{i}\llVert \sigma ) \bigr) \Biggr]
\\
&&\qquad =\lim_{T\rightarrow\infty}E \bigl[ F ( \bar{\eta}_{T} ) \bigr] +
\lim_{T\rightarrow\infty}\int_{S}R \bigl( p ( x,\cdot )
\llVert \alpha ( x,\cdot )  \bigr) E \Biggl[ \frac{1}{T}\sum
_{i=1}^{\bar{R}_{T}}\delta_{\bar{X}_{i-1}} ( dx )
\Biggr]
\\
&&\qquad\quad{} +\lim_{T\rightarrow\infty}E \Biggl[ \frac{1}{T}\sum
_{i=1}^{\bar{R}_{T}%
}R ( \bar{\sigma}_{i}\llVert
\sigma ) \Biggr]
\\
&&\qquad =F ( \eta ) +AR \bigl( \mu\llVert [ \mu ] _{1}\otimes\alpha
\bigr) +\int_{S}\ell \bigl( A\kappa ( x ) \bigr) q ( x ) \eta
( dx ).
\end{eqnarray*}
Returning to the proof of the second part of Lemma~\ref{DV rate}, we
find that
with this choice of $A$, $\mu$ and $\kappa$, the rate function $I (
\eta ) $ coincides with $AR ( \mu\llVert  [ \mu ]
_{1}\otimes\alpha  ) +\int_{S}\ell ( A\kappa (
x )  ) q ( x ) \eta ( dx ) $ (note that this
$\eta$ corresponds to a special of Lemma~\ref{DV rate} where $\Theta
\doteq \{ x\in S\dvtx\theta ( x ) =0 \} $ is empty). This
completes the proof of the Laplace lower bound.

\section{On the boundedness of rate function}

As pointed out in the \hyperref[intro]{Introduc-}\break\hyperref[intro]{tion}, continuous time jump Markov processes
differ from the type of processes considered by Donsker and Varadhan in
\cite{donvar1,donvar3}, in that the dynamics do not have a
``diffusive'' component, and hence Condition \ref{cond_diff}
does not hold. For jump Markov models, the process only moves when a jump
occurs, and there is no continuous change of position. For these
processes, the
rate function is bounded, whereas for the processes of \cite{donvar1,donvar3}
the rate function is infinity when the target measure is not absolutely
continuous with respect to the reference measure. We now consider the source
and implications of this distinction.

Consider a process satisfying all the conditions in Section~\ref{setup} that
has $\pi$ as its invariant distribution. In order to hit a different
probability measure $\eta\in\mathcal{P} ( S ) $, we need to perturb
the original dynamics, which includes the distortion of the Markov chain
transition probability $\alpha$ and the distortion of the exponential holding
time~$\sigma$. Each of these distortions must pay a relative entropy
cost, and
the minimum of the (suitably normalized) sum of these costs asymptotically
approximates the rate function $I ( \eta ) $. When $\eta$ is
singular with respect to $\pi$, the relative entropy cost from the distortion
of $\alpha$ can be made arbitrarily small, and the rate function is almost
entirely due to contributions coming from the distortion of $\sigma$.
We will
illustrate this point via the following example.\vadjust{\goodbreak}

Recall the model mentioned in the \hyperref[intro]{Introduction}, where the state space
$S$ is
$ [ 0,1 ] $, the jump intensity is $q\equiv1$, and for each
$x\in [ 0,1 ] $, $\alpha ( x,\cdot ) $ is the uniform
distribution on $ [ 0,1 ] $. The invariant distribution $\pi$ is
just the uniform distribution on $ [ 0,1 ] $. Now consider a Dirac
measure $\eta\doteq\delta_{1/2}$ as a target measure. $\eta$ is not absolutely
continuous with respect to $\pi$. However, we can approximate $\eta$ weakly
via a sequence of probability measures that are absolutely continuous with
respect to $\pi$. For each $n\in%
\mathbb{N,}
$ define a probability measure $\eta^{n}$ by its Radon--Nikodym derivative
$\theta^{n}$ with respect to $\pi$ according~to
\[
\theta^{n} ( x ) \doteq\cases{ %
n-1, &\quad $\mbox{for }\displaystyle x\in \biggl( \frac{1}{2}-
\frac{1}{2n},\frac{1}{2}+\frac
{1}{2n} \biggr),$
\vspace*{2pt}\cr
\displaystyle\frac{1}{n-1}, &\quad $\mbox{otherwise}.$}
\]
Using the formula (\ref{I1}) for rate function, we have
\[
I \bigl( \eta^{n} \bigr) =1- \biggl( \int_{0}^{1}
\bigl( \theta^{n} ( x ) \bigr) ^{1/2}\,dx \biggr) \biggl( \int
_{0}^{1} \bigl( \theta ^{n} ( y ) \bigr)
^{1/2}\,dy \biggr) =1-\frac{4(n-1)}{n^{2}}.
\]
According to the definition of rate function in Section~\ref
{rate_defn}, the
rate function is bounded above by $1$. However, $I ( \eta^{n} )
\rightarrow1$ as $n\rightarrow\infty$, and one can check that this is
true for
any sequence of absolutely continuous measures converging weakly to~$\eta$.
Thus, $I ( \eta ) =1$.

We now consider fixed $n\in%
\mathbb{N}
$ and examine the perturbed dynamics that can hit the measure $\eta
^{n}$. This
is most easily understood by examining the minimizer in the variational
formula for the rate function, whose form was suggested during the
proof of
the Laplace principle lower bound in Section~\ref{lplb}. Recall that
$\bar{\sigma}_{i} ( \cdot ) $ and $\bar{\alpha}_{i} (
\cdot ) $ are perturbed dynamics for the exponential holding time and
the Markov chain, $\bar{\sigma}_{i} ( \cdot ) $ depends on
$ \{ \bar{X}_{0},\bar{\tau}_{1},\bar{X}_{1},\bar{\tau}_{2},\ldots
,\bar
{X}_{i-1} \} $ and $\bar{\alpha}_{i} ( \cdot ) $ depends on
$ \{ \bar{X}_{0},\bar{\tau}_{1},\bar{X}_{1},\bar{\tau}_{2},\ldots
,\bar
{X}_{i-1},\bar{\tau}_{i} \} $. $\bar{\tau}_{i}$ and $\bar{X}_{i}$ are
chosen recursively according to stochastic kernels $\bar{\sigma
}_{i} (
\cdot ) $ and $\bar{\alpha}_{i} ( \cdot ) $. Specifically,
$\bar{s}_{i}$ is defined by (\ref{soj}) using $\bar{X}_{i}$ and $\bar
{\tau
}_{i}$; $\bar{R}_{T}$ is defined by (\ref{R_T}) using $\bar{s}_{i}$; and
$\bar{\eta}_{T}$ is defined by (\ref{eta_T}) using $\bar{X}_{i}$, $\bar
{\tau
}_{i}$ and $\bar{R}_{T}$. Following the procedure in Section~\ref
{lplb}, we
first define $\mu\in\mathcal{P} ( S\times S ) $ as in (\ref{mu}).
Thus, $\mu$ is the product measure. As before, we use $ [ \mu ]
_{1}$ to denote the first marginal of $\mu$ and $p$ to denote the regular
conditional probability such that $\mu= [ \mu ] _{1}\otimes p$.
Since $\mu$ is a product measure defined by (\ref{mu}), $ [ \mu
]
_{1}$ and $p$ are in fact the same measure and the density with respect to
$\pi$ can be calculated as
%
\begin{equation}
\frac{d [ \mu ] _{1}}{d\pi} ( x ) =\cases{ %
\displaystyle\frac{n}{2}, &\quad $\mbox{for }
\displaystyle x\in \biggl( \frac{1}{2}-
\frac{1}{2n},\frac{1}%
{2}+\frac{1}{2n} \biggr),$
\vspace*{2pt}\cr
\displaystyle\frac{n}{2 ( n-1 ) },&\quad $\mbox{otherwise}.$}\label{marginal}%
\end{equation}
As in Section~\ref{lplb} let $\bar{\alpha}_{i}\doteq p$ for each $i$. A direct
calculation of $A$ using formula (\ref{a}) shows that $A=4 (
n-1 )
/n^{2}$. Also, $\kappa$ defined in (\ref{k}) reduces to
\[
\kappa ( x ) =\cases{ %
\displaystyle\frac{n}{2 ( n-1 ) }, & \quad$\mbox{for }
\displaystyle x\in \biggl( \frac{1}{2}-
\frac
{1}{2n},\frac{1}{2}+\frac{1}{2n} \biggr),$
\vspace*{2pt}\cr
\displaystyle\frac{n}{2}, & \quad $\mbox{otherwise}.$}
\]
As in Section~\ref{lplb}, $\bar{\sigma}_{i}$ should be the exponential
distribution with mean\break $ [ A\kappa ( \bar{X}_{i-1} )  ]
^{-1}$\hspace*{-1pt}. Hence, if $\bar{X}_{i-1}$ falls into $ (
1/2-1/(2n),1/2+1/(2n) ) $, $\bar{\sigma}_{i}$ would be the exponential
distribution with mean $n/2$, otherwise $\bar{\sigma}_{i}$ would be the
exponential distribution with mean $n/ [ 2 ( n-1 )  ] $.
Now the perturbed Markov jump process, denoted by $\bar{X} ( t
) $,
is constructed using $\bar{\alpha}_{i}$ and $\bar{\sigma}_{i}$ defined as
above. As proved in Lemma~\ref{lem:costs}, the expected value of the relative
entropy cost
\[
\frac{1}{T}\sum_{i=1}^{\bar{R}_{T}} \bigl( R
( \bar{\alpha}%
_{i-1}\llVert \alpha ) +R
( \bar{\sigma}_{i}\llVert \sigma ) \bigr)
\]
converges to
\[
I \bigl( \eta^{n} \bigr) =AR \bigl( \mu\llVert [ \mu ] _{1}
\otimes\alpha \bigr) +\int_{0}^{1}\ell
\bigl( A\kappa ( x ) \bigr) \eta^{n} ( dx )
\]
as $T\rightarrow\infty$. We have noted that $p ( x,dy ) = [
\mu ] _{1} ( dy ) $ and $\alpha ( x,dy )
=\pi ( dy ) $, and by using (\ref{marginal})
\begin{eqnarray*}
&& AR \bigl( \mu\llVert [ \mu ] _{1}\otimes\alpha \bigr)
\\
&&\qquad =A\int_{0}^{1}R \bigl( p ( x,\cdot ) \llVert
\alpha ( x,\cdot )  \bigr) [ \mu ] _{1} ( dx )
\\
&&\qquad =\frac{4 ( n-1 ) }{n^{2}} \biggl( \log n-\log2-\frac{\log (
n-1 ) }{2} \biggr).
\end{eqnarray*}
This converges to $0$ as $n\rightarrow\infty$. Hence, the relative
entropy cost
that comes from the distortion of the Markov chain converges to $0$.
For the
second term, we have
\[
\int_{0}^{1}\ell \bigl( A\kappa ( x ) \bigr)
\eta^{n} ( dx ) =\frac{2 ( n-1 ) }{n^{2}} \bigl( \log ( n-1 ) +2\log2-\log n
\bigr) -\frac{4 ( n-1 ) }{n^{2}}+1,
\]
which converges to $1$ as $n\rightarrow\infty$. Thus, as $\eta^{n}$ approaches
the target distribution $\eta$, the relative entropy cost that comes
from the
distortion of Markov chain vanishes, and the rate function becomes solely
determined by the relative entropy cost that comes from the distortion of
exponential waiting times.

One can generalize the argument to more general discrete target measures,
where one utilizes the original dynamics to make sure neighborhoods of the
various points are visited, and then uses the time dilation to control their
relative weight.

\begin{appendix}
\section*{Appendix}\label{app}

\subsection{Proof of inequality (\texorpdfstring{\protect\ref{inequality}}{4.30})}
\mbox{}

\begin{pf}
Recall that $R ( \mu\llVert  [ \mu ] _{1}\otimes
\alpha  ) <\infty$, where $ [ \mu ] _{1}= [
\mu ] _{2}$ and $\widetilde{\pi}$ is invariant under $\alpha$.
Additionally, we also have Condition \ref{cond2'}, that is, there
exists an
integer $N$ and a real number $c\in(0,\infty)$ such that
%
\setcounter{equation}{0}
\begin{equation}
\alpha^{(N)} ( x,\cdot ) \leq c\widetilde{\pi} ( \cdot )
\label{cond4}%
\end{equation}
for all $x\in S$. Now let $p$ be the regular conditional probability
such that
$\mu= [ \mu ] _{1}\otimes p$. Then
\[
R \bigl( \mu\llVert [ \mu ] _{1}\otimes\alpha \bigr) =R \bigl(
[ \mu ] _{1}\otimes p\llVert [ \mu ] _{1}\otimes\alpha
 \bigr) <\infty.
\]
The chain rule of relative entropy implies that
%
\begin{eqnarray}\label{ind}%
&& R \bigl( [ \mu ] _{1}\otimes\mathop{p\otimes\cdots\otimes
p}_{N}\llVert [ \mu ] _{1}\otimes\mathop{\alpha\otimes
\cdots\otimes\alpha}_{N} \bigr)
\nonumber
\\[-8pt]
\\[-8pt]
\nonumber
&&\qquad=N\cdot R \bigl( [ \mu ]
_{1}\otimes p\llVert [ \mu ] _{1}\otimes\alpha \bigr)
<\infty.
\end{eqnarray}
Indeed, since $ [ \mu ] _{1}$ is invariant under $p$, for any
integer $n$ the $n$th marginal of $ [  [ \mu ] _{1}%
\otimes\mathop{p\otimes\cdots\otimes p}_{n-1} ] $ is
\[
\bigl[ [ \mu ] _{1}\otimes\mathop{p\otimes\cdots\otimes
p}_{n-1} \bigr] _{n}= [ \mu ] _{1}.
\]
Hence, (\ref{ind}) follows by induction:
\begin{eqnarray*}
&& R \bigl( [ \mu ] _{1}\otimes\mathop{p\otimes\cdots\otimes
p}_{n}\llVert [ \mu ] _{1}\otimes \mathop{\alpha\otimes\cdots
\otimes\alpha}_{n} \bigr)
\\
&&\qquad =R \bigl( [ \mu ] _{1}\otimes\mathop{p\otimes\cdots\otimes
p}_{n-1}\llVert [ \mu ] _{1}\otimes \mathop{\alpha\otimes
\cdots\otimes\alpha}_{n-1} \bigr) \\
&&\qquad\quad{}+\int_{S}R
( p\llVert \alpha ) \,d \bigl[ [ \mu ] _{1}%
\otimes\mathop{p\otimes\cdots\otimes p}_{n-1} \bigr] _{n}
\\
&&\qquad = ( n-1 ) \cdot R \bigl( [ \mu ] _{1}\otimes p\llVert [ \mu ]
_{1}\otimes\alpha \bigr) +\int_{S}R (
p\llVert \alpha ) \,d [ \mu ] _{1}
\\
&&\qquad =n\cdot R \bigl( [ \mu ] _{1}\otimes p\llVert [ \mu ] _{1}
\otimes\alpha \bigr).
\end{eqnarray*}
Let $ [ \nu ] _{k|j}$ denote the conditional probability of the
$k$th argument of $\nu$ given the $j$th argument of $\nu$. Note that
one can
define a mapping from $\mathcal{P} ( S^{N+1} ) $ to $\mathcal
{P}%
( S^{2} ) $ such that each $\nu\in\mathcal{P} (
S^{N+1} ) $ is mapped to $ [ \nu ] _{1}\otimes [
\nu ] _{N+1|1}$. Since the relative entropy for induced measures is
always smaller, (\ref{ind}) implies
\[
R \bigl( [ \mu ] _{1}\otimes p^{ ( N ) }\llVert [ \mu ]
_{1}\otimes\alpha^{ ( N ) } \bigr) <\infty.
\]
Now since $ [ \mu ] _{1}$ is invariant under $p$, it is also
invariant under $p^{ ( N ) }$ and, therefore, $ [  [
\mu ] _{1}\otimes p^{ ( N ) } ] _{2}= [
\mu ] _{1}$. Using the chain rule of relative entropy again gives
\[
R \bigl( [ \mu ] _{1}\llVert \bigl[ [ \mu ] _{1}\otimes
\alpha^{ ( N ) } \bigr] _{2} \bigr) <\infty.
\]
This implies (\ref{inequality}), since
\begin{eqnarray*}
\infty& >&R \bigl( [ \mu ] _{1}\llVert \bigl[ [ \mu ] _{1}
\otimes\alpha^{ ( N ) } \bigr] _{2} \bigr)
\\
& =&R \bigl( [ \mu ] _{1}\llVert \widetilde{\pi} \bigr) -\log
\int_{S}\frac{d (  [ \mu ] _{1}\otimes
\alpha^{ ( N ) } ) _{2}}{d\widetilde{\pi}} [ \mu ] _{1}
\\
& \geq& R \bigl( [ \mu ] _{1}\llVert \widetilde{\pi} \bigr) -
\log c,
\end{eqnarray*}
where $c$ is from (\ref{cond4}).
\end{pf}

\subsection{Proof of Lemma \texorpdfstring{\protect\ref{rep_ran}}{5.1}}

The proof of the representation is standard, save for the fact that
$R_{T}$ is
random. We include a proof here for completeness.

\begin{pf}
Define for each $k\in%
\mathbb{N}
_{+}$
\[
\eta_{T}^{k} ( \cdot ) \doteq\frac{1}{T} \Biggl[ \sum
_{i=1}%
^{R_{T}\wedge k-1}\delta_{X_{i-1}}
( \cdot ) \frac{\tau_{i}%
}{q ( X_{i-1} ) }+\delta_{X_{R_{T}}\wedge k-1} ( \cdot ) \Biggl( T-\sum
_{i=1}^{R_{T}\wedge k-1}\frac{\tau_{i}}{q (
X_{i-1} )
} \Biggr)
\Biggr].
\]
For any measure $\nu^{k}\in\mathcal{P} (  ( S\times%
\mathbb{R}
_{+} ) ^{k} ) $, we can decompose $\nu^{k}$ as
%
\begin{equation}
\nu^{k}=\bar{\alpha}_{0}\otimes\bar{\sigma}_{1}
\otimes\bar{\alpha}_{1}%
\otimes\bar{\sigma}_{2}
\otimes\cdots\otimes\bar{\alpha}_{k-1}\otimes \bar{\sigma}_{k}.
\label{decom}%
\end{equation}
Choose the barred random variables $\bar{X}_{i}$ and $\bar{\tau}_{i}$
according to $\bar{\alpha}_{i}$ and $\bar{\sigma}_{i}$ as before and define
the corresponding $\bar{R}_{T}\wedge k$ the following way: if $\sum_{i=1}%
^{k}\bar{\tau}_{i}/\break q ( \bar{X}_{i-1} ) >T$, then $\bar
{R}_{T}\wedge
k\doteq\bar{R}_{T}$ where $\bar{R}_{T}$ is the integer that satisfies
\[
\sum_{i=1}^{\bar{R}_{T}-1}\frac{\bar{\tau}_{i}}{q ( \bar
{X}_{i-1} )
}\leq T<
\sum_{i=1}^{\bar{R}_{T}}\frac{\bar{\tau}_{i}}{q ( \bar{X}%
_{i-1} ) },
\]
otherwise define $\bar{R}_{T}\wedge k\doteq k$. We also define
%
\begin{eqnarray}\label{eta_bar_k}%
&&\bar{\eta}_{T}^{k} ( \cdot ) \doteq\frac{1}{T}
\Biggl[ \sum_{i=1}^{\bar{R}_{T}\wedge k-1}\delta_{\bar{X}_{i-1}}
( \cdot ) \frac{\bar{\tau}_{i}}{q ( \bar{X}_{i-1} ) }
\nonumber
\\[-8pt]
\\[-8pt]
\nonumber
&&\hspace*{51pt}{}+\delta_{\bar{X}%
_{\bar{R}_{T}\wedge k-1}} ( \cdot ) \Biggl( T-\sum
_{i=1}^{\bar
{R}_{T}\wedge k-1}\frac{\bar{\tau}_{i}}{q ( \bar{X}_{i-1} )
} \Biggr)
\Biggr].
\end{eqnarray}
If we denote the multidimensional probability measure corresponding to the
original dynamics by $\mu^{k}\in\mathcal{P} (  ( S\times%
\mathbb{R}
_{+} ) ^{k} ) $, that is,
\[
\mu^{k}\doteq\alpha^{ ( k ) }\times \biggl( \prod
_{k}\sigma \biggr) ,
\]
then applying Lemma~\ref{var_for} gives
%
\begin{eqnarray}\label{k_rep1}%
&&-\frac{1}{T}\log E \bigl[ \exp \bigl\{ -TF \bigl( \eta_{T}^{k}
\bigr) \bigr\} \bigr]
\nonumber
\\[-8pt]
\\[-8pt]
\nonumber
&&\qquad =\inf_{\nu^{k}\in\mathcal{P} (  ( S\times%
\mathbb{R}
_{+} ) ^{k} ) } \biggl[ \int
_{ ( S\times%
\mathbb{R}
_{+} ) ^{k}}F \bigl( \bar{\eta}_{T}^{k} \bigr)
\,d\nu^{k}+\frac{1}%
{T}R \bigl( \nu^{k}\llVert
\mu^{k} \bigr) \biggr].
\end{eqnarray}
By applying Theorem~\ref{chain_rule} repeatedly to $R ( \nu^{k}\llVert
\mu^{k}  ) $, we obtain
\[
R \bigl( \nu^{k}\llVert \mu^{k} \bigr) =E \biggl[
\sum_{i=1}%
^{k} \bigl( R (
\bar{\alpha}_{i-1}\llVert \alpha ) +R ( \bar{
\sigma}_{i}\llVert \sigma ) \bigr) \biggr] .
\]
We can thus rewrite (\ref{k_rep1}) as
%
\begin{eqnarray}\label{k_rep2}%
&&-\frac{1}{T}\log E \bigl[ \exp \bigl\{ -TF \bigl( \eta_{T}^{k}
\bigr) \bigr\} \bigr]
\nonumber
\\[-8pt]
\\[-8pt]
\nonumber
&&\qquad=\inf_{\nu^{k}\in\mathcal{P} (  ( S\times%
\mathbb{R}
_{+} ) ^{k} ) }E \Biggl[ F \bigl( \bar{
\eta}_{T}^{k} \bigr) +\frac{1}{T}\sum
_{i=1}^{k} \bigl( R ( \bar{\alpha}_{i-1}
\llVert \alpha ) +R ( \bar{\sigma}_{i}\llVert \sigma
 ) \bigr) \Biggr].
\end{eqnarray}
Now for each $\nu^{k}\in\mathcal{P} (  ( S\times%
\mathbb{R}
_{+} ) ^{k} ) $, we construct another measure $\widehat{\nu}%
^{k}\in\mathcal{P} (  ( S\times%
\mathbb{R}
_{+} ) ^{k} ) $ recursively as follows: define $\widehat
{\alpha
}_{0}\doteq\bar{\alpha}_{0}$ and $\widehat{\sigma}_{1}\doteq\bar{\sigma}_{1}$.
For all $2\leq i\leq k$, define $\widehat{\alpha}_{i-1}$ and $\widehat
{\sigma
}_{i}$ by
\[
( \widehat{\alpha}_{i-1},\widehat{\sigma}_{i} ) =
\cases{ %
( \bar{\alpha}_{i-1},
\bar{\sigma}_{i} ), & \quad$\mbox{if }\displaystyle\sum_{j=1}^{i-1}
\frac{\bar{\tau}_{j}}{q ( \bar{X}_{j-1} ) }\leq T,$
\vspace*{2pt}\cr
( \alpha,\sigma ), & \quad $\mbox{otherwise}.$}
\]
Thus, we return to the original dynamics with zero relative entropy
cost after
$\bar{R}_{T}$. Define $\widehat{\nu}^{k}$ using $\widehat{\alpha}_{i}$ and
$\widehat{\sigma}_{i}$ by (\ref{decom}). From the definition (\ref{eta_bar_k}),
we have $E [ F ( \widehat{\eta}_{T}^{k} )  ] =E [
F ( \bar{\eta}_{T}^{k} )  ] $, and
\begin{eqnarray*}
E \Biggl[ \sum_{i=1}^{k} \bigl( R (
\widehat{\alpha}_{i-1}\llVert \alpha ) +R ( \widehat{
\sigma}_{i}\llVert \sigma ) \bigr) \Biggr] & =&E \biggl[
\sum_{i=1}%
^{\widehat{R}_{T}\wedge k} \bigl( R (
\widehat{\alpha}_{i-1}\llVert \alpha ) +R ( \widehat{
\sigma}_{i}\llVert \sigma ) \bigr) \biggr]
\\
& =&E \Biggl[ \sum_{i=1}^{\bar{R}_{T}\wedge k} \bigl( R
( \bar{\alpha }_{i-1}\llVert \alpha ) +R ( \bar{
\sigma}_{i}\llVert \sigma ) \bigr) \Biggr]
\\
& \leq& E \Biggl[ \sum_{i=1}^{k} \bigl( R
( \bar{\alpha}_{i-1}\llVert \alpha ) +R ( \bar{
\sigma}_{i}\llVert \sigma ) \bigr) \Biggr].
\end{eqnarray*}
Hence, we can rewrite (\ref{k_rep2}) as
%
\begin{eqnarray}\label{k_rep3}%
&&-\frac{1}{T}\log E \bigl[ \exp \bigl\{ -TF \bigl( \eta_{T}^{k}
\bigr) \bigr\} \bigr]
\nonumber
\\[-8pt]
\\[-8pt]
\nonumber
&&\qquad=\inf_{\nu^{k}\in\mathcal{P} (  ( S\times%
\mathbb{R}
_{+} ) ^{k} ) }E \Biggl[ F \bigl( \bar{
\eta}_{T}^{k} \bigr) +\frac{1}{T}\sum
_{i=1}^{\bar{R}_{T}\wedge k} \bigl( R ( \bar{\alpha
}_{i-1}\llVert \alpha ) +R ( \bar{
\sigma}_{i}\llVert \sigma ) \bigr) \Biggr].
\end{eqnarray}
Using the pointwise convergence of both $R_{T}\wedge k\rightarrow
R_{T}$ and
$\bar{R}_{T}\wedge k\rightarrow\bar{R}_{T}$ as $k\rightarrow\infty$, by the
dominated convergence theorem
\begin{eqnarray*}
\lim_{k\rightarrow\infty}-\frac{1}{T}\log E \bigl[ \exp \bigl\{ -TF
\bigl( \eta_{T}^{k} \bigr) \bigr\} \bigr] &=&-\frac{1}{T}
\log E \bigl[ \exp \bigl\{ -TF ( \eta_{T} ) \bigr\} \bigr],
\\
\lim_{k\rightarrow\infty}E \bigl[ F \bigl( \bar{\eta
}_{T}^{k} \bigr) \bigr] &=& E \bigl[ F ( \bar{
\eta}_{T} ) \bigr].
\end{eqnarray*}
Also, by the monotone convergence theorem
\[
\lim_{k\rightarrow\infty}E \Biggl[ \sum_{i=1}^{\bar{R}_{T}\wedge k}
\bigl( \bigl( R ( \bar{\alpha}_{i-1}\llVert \alpha )
+R ( \bar{\sigma}_{i}\llVert \sigma ) \bigr) \bigr)
\Biggr] =E \Biggl[ \sum_{i=1}^{\bar{R}_{T}} \bigl(
\bigl( R ( \bar{\alpha}_{i-1}\llVert \alpha ) +R (
\bar{\sigma }_{i}\llVert \sigma ) \bigr) \bigr) \Biggr].
\]
Hence, by taking limits on both sides of (\ref{k_rep3}), we arrive at
\[
-\frac{1}{T}\log E \bigl[ \exp \bigl\{ -TF ( \eta_{T} ) \bigr\}
\bigr] =\inf\bar{E} \Biggl[ F ( \bar{\eta}_{T} ) +
\frac{1}%
{T}\sum_{i=1}^{\bar{R}_{T}} \bigl(
\bigl( R ( \bar{\alpha}%
_{i-1}\llVert \alpha
) +R ( \bar{\sigma}_{i}\llVert \sigma ) \bigr)
\bigr) \Biggr],
\]
where the infimum is taken over all controlled measures $ \{ \bar
{\alpha
}_{i},\bar{\sigma}_{i} \} $. This proves the lemma.
\end{pf}
\end{appendix}
\section*{Acknowledgments}
The authors thank a referee for useful comments
and a correction.

%


\printaddresses

\end{document}